\setlist{topsep=4pt,itemsep=1pt,partopsep=1pt,parsep=2pt}
\def\paragraph{\@startsection{paragraph}{4}%
  \z@\z@{-\fontdimen2\font}%
  {\normalfont\bfseries}}
\newtheorem{theorem}{Theorem}[section]
\newtheorem{lemma}[theorem]{Lemma}
\newtheorem{corollary}[theorem]{Corollary}
\theoremstyle{definition}
\newtheorem{definition}[theorem]{Definition}
\newtheorem{remark}[theorem]{Remark}
\numberwithin{equation}{section}
\crefname{equation}{}{}
\newcommand*\diff{\mathop{}\!\mathrm{d}}
\newcommand{\Mod}[1]{\, (\mathrm{mod} \, #1)}
\let\a\relax
\DeclareMathOperator{\a}{\mathbf{a}}
\DeclareMathOperator{\BBox}{\mathcal{B}}
\DeclareMathOperator{\C}{\mathbb{C}}
\DeclareMathOperator{\Constant}{\mathcal{C}}
\DeclareMathOperator{\h}{\mathbf{h}}
\DeclareMathOperator{\Interval}{\mathcal{I}}
\DeclareMathOperator{\Jintegral}{\mathfrak{J}}
\DeclareMathOperator{\ki}{\mathfrak{k}}
\DeclareMathOperator{\Ka}{\mathfrak{K}}
\DeclareMathOperator{\mi}{\mathfrak{m}}
\DeclareMathOperator{\Ma}{\mathfrak{M}}
\DeclareMathOperator{\N}{\mathbb{N}}
\DeclareMathOperator{\nn}{\mathfrak{n}}
\DeclareMathOperator{\Na}{\mathfrak{N}}
\DeclareMathOperator{\Pa}{\mathfrak{P}}
\DeclareMathOperator{\R}{\mathbb{R}}
\DeclareMathOperator{\Series}{\mathfrak{S}}
\DeclareMathOperator{\T}{\mathbb{T}}
\DeclareMathOperator{\w}{\mathbf{w}}
\DeclareMathOperator{\x}{\mathbf{x}}
\DeclareMathOperator{\Z}{\mathbb{Z}}
\DeclareMathOperator{\Zero}{\mathcal{Z}}
\newcommand{\norm}[1]{\left\| #1 \right\|}
\newcommand{\set}[1]{\left\{ #1 \right\}}
\newcommand{\of}[1]{\left( #1 \right)}
\newcommand{\abs}[1]{\left| #1 \right|}
\newcommand{\floor}[1]{\left\lfloor #1 \right\rfloor}
\title[The Hasse principle for diagonal forms on adjacent degree hypersurfaces]{The Hasse principle for diagonal forms restricted to a hypersurface of adjacent degree}
\author{Anna Theorin Johansson}
\address{Department of Mathematical Sciences, Chalmers University of Technology and the University of Gothenburg, SE-412 96 Gothenburg, Sweden}
\subjclass{11D72 (Primary) 11D45, 11P55, 14G05 (Secondary)}
\email{annatheo@chalmers.se} 
\date{}
\begin{document}

\begin{abstract}
    We investigate the Hasse principle for Diophantine systems consisting of one diagonal form of degree $k$ and one general form of degree $k-1$. By refining the method of Brandes and Parsell \cite{brandes_hasse_2021} in this specific setting, we improve the bound $n > 2^k k$ to $n > 2^{k-1}(2k-1)$; in particular, the requirement $n > 24$ in the case of degrees three and two is relaxed to $n > 20$. 
\end{abstract}

\maketitle

\section{Introduction}
Suppose that we are given a pair of forms $F, G \in \Z[x_1, \dots, x_n]$ of degree $k$ and $d$ respectively. The goal of this paper is to give an asymptotic formula for the number $N_{F, G}(X)$ of integer points $\x \in [-X, X]^n$ satisfying $F(\x) = G(\x) = 0$. For $n$ sufficiently large and for suitably non-singular forms, we expect this to be of the form
\begin{equation}\label{eq:asymp}
    N_{F, G}(X) = X^{n-k-d} \of{\Constant + O\of{X^{-\nu}}},
\end{equation}
where $\Constant \geq 0$ is a constant and $\nu> 0$ is a small number, both dependent at most on the forms $F$ and $G$. In particular, the objective is to find a $n_0 = n_0(k, d)$ such that \cref{eq:asymp} is satisfied for all $n > n_0$. 

If both $F$ and $G$ are diagonal, Wooley's efficient congruencing method gives
\begin{equation*}
    n_0(k, d) \leq k(k+1)
\end{equation*}
for any two distinct degrees $k > d$, with better bounds if $d$ is much smaller than $k$ (see \cite[\S 14]{wooley_nested_2019}). In the case $k = 3$ and $d = 2$, one can achieve 
\begin{equation*}
    n_0(3, 2) \leq \frac{32}{3},
\end{equation*}
such that $n = 11$ would suffice \cite{wooley_rational_2015}, which reaches the square root barrier.

On the other hand, without any assumption of diagonality, one must resort to the work of Browning and Heath-Brown in \cite{browning_forms_2017}, which studies systems of forms of differing degrees in full generality. These methods give an upper bound for $n_0$ which grows exponentially in both $k$ and $d$; to be precise, in \cite[Corollary 1.8]{browning_forms_2017} it is shown that
\begin{equation}\label{eq:n_0_BHB}
    n_0(k, d) \leq (2+d)(k-1)2^{k-1} + d2^{d-1}. 
\end{equation}
With $k = 3$ and $d = 2$, this corresponds to
\begin{equation*}
    n_0(3, 2) \leq 36,
\end{equation*}
but this particular result can actually be sharpened. Indeed, Browning, Dietmann and Heath-Brown, in \cite[Theorem 1.3]{browning_rational_2015}, show that for one cubic and one quadratic, one can actually take
\begin{equation}\label{eq:n_0_32_BDHB}
    n_0(3, 2) \leq 28.
\end{equation}

In the case where $F$ is diagonal but no particular shape restriction is put on $G$, Brandes and Parsell showed in \cite{brandes_hasse_2021} that one can use the diagonal structure of $F$ to replace the exponential growth in $k$ in \cref{eq:n_0_BHB} by a quadratic one. With $F$ diagonal of degree $k$ and $G$ of degree $d < k$, \cite[Theorem 1.1]{brandes_hasse_2021} gives
\begin{equation}\label{eq:n_0_kd_BP}
    n_0(k, d) \leq 
    \begin{cases}
        2^k(d+1) & \quad \text{if } d+1 \leq k \leq d+4, \\
        2^d(26 + 32d) & \quad \text{if } k = d + 5, \\
        2^d\left[(2d+1)k^2 - L_d(k) \right] & \quad \text{if } k \geq d + 6,
    \end{cases} 
\end{equation}
with $L_d(k) = (4d^2 + 8d + 1)k - 2d^3-7d^2-5d-4d\floor{\sqrt{2k-2d}} - 2 \floor{\sqrt{2k-2d+2}}$. This is an improvement of \cref{eq:n_0_BHB} in all cases; in particular, if $d = k-1$, this is
\begin{equation}\label{eq:n_0_k_BP}
    n_0(k, k-1) \leq 2^k k.
\end{equation}
For instance,
\begin{equation}\label{eq:n_0_32_BP}
    n_0(3, 2) \leq 24
\end{equation}
is a clear improvement to \cref{eq:n_0_32_BDHB}.

The goal for this paper is to streamline the use of the Weyl inequality in the proof of the above in the case $d = k-1$ to improve \cref{eq:n_0_k_BP}. The following is our main result.
\begin{theorem}\label{thm:main}
    Let $F, G \in \Z[x_1, \dots, x_n]$ be a pair of non-singular forms, where $F$ is diagonal of degree $k \geq 3$ of the shape 
    \begin{equation*}
       F(\x) = F(x_1, \dots, x_n) = c_1 x_1^k + \dots + c_n x_n^k,
    \end{equation*}
    and $G$ has degree 
    \begin{equation*}
       d = k-1.
    \end{equation*}
    Suppose that
    \begin{equation}\label{eq:n20}
        n > 2^{k-1} (2k-1).
    \end{equation}
    Then for some $\nu > 0$, we have
    \begin{equation*}
        N_{F, G}(X) = X^{n-k-d} \of{\Constant_{F, G} + O\of{X^{-\nu}}},
    \end{equation*}
    where $\Constant_{F, G} \geq 0$ is a product of local solution densities associated with the system $F(\x) = G(\x) = 0$. 
\end{theorem}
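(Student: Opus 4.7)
The plan is to adapt the Hardy-Littlewood circle method in two variables, broadly following the Brandes-Parsell framework and refining the Weyl step under the hypothesis $d = k-1$. I begin by introducing the exponential sum
\[
S(\alpha, \beta) = \sum_{\x \in [-X, X]^n \cap \Z^n} e\of{\alpha F(\x) + \beta G(\x)},
\]
so that orthogonality gives $N_{F,G}(X) = \int_0^1 \int_0^1 S(\alpha, \beta) \diff\alpha \diff\beta$. Partition the unit square $\T^2$ into major arcs $\Ma$, consisting of small two-dimensional boxes centred at points $(a/q, b/q)$ with $q$ at most a small power of $X$, and minor arcs $\mi = \T^2 \setminus \Ma$.

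The major arc contribution is handled along standard lines: one approximates $S(\alpha, \beta)$ locally by a product of a complete exponential sum (contributing local $p$-adic densities) and a smooth archimedean density, and integrates to obtain the predicted main term $X^{n-k-d}\Constant_{F,G} + O(X^{n-k-d-\nu})$, where $\Constant_{F,G}$ is the product of a singular series and a singular integral. The hypothesis $n > 2^{k-1}(2k-1)$ comfortably exceeds what is required for the absolute convergence of both, so this step does not drive the final variable count.

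The minor arc contribution is the crux of the argument. Following Brandes-Parsell, I further dissect $\mi$ according to the Diophantine approximability of $\alpha$ alone: let $\Na \subset [0,1]$ be one-dimensional major arcs for the degree-$k$ form $F$. On $(\Na \times [0,1]) \cap \mi$, the $\alpha$-direction is structured, and one can combine major arc estimates for $F$ with a Browning-Heath-Brown bound for $\sum_\x e(\beta G(\x))$ to dispose of the $\beta$-integration. On $([0,1] \setminus \Na) \times [0,1]$, the idea is to apply $k-1$ Weyl differencings \emph{jointly} to $F$ and $G$: with shift vectors $\h_1, \dots, \h_{k-1}$, the iterated difference $\Delta_{\h_{k-1}} \cdots \Delta_{\h_1} F(\x)$ is linear in $\x$, while $\Delta_{\h_{k-1}} \cdots \Delta_{\h_1} G(\x)$ collapses to a constant in $\x$ since $\deg G = k-1$. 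The residual $\x$-sum therefore factors as a product of univariate geometric series, yielding a stronger pointwise bound on $S(\alpha, \beta)$ than one obtains from treating the two forms separately. This joint treatment is the streamlining: by saving one differencing over the Brandes-Parsell treatment, it trades the factor $2^k \cdot k$ in their bound for $2^{k-1}(2k-1)$.

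The main obstacle will be the simultaneous balance between the regimes. The refined Weyl estimate weakens as $\alpha$ approaches a small-denominator rational, so $\Na$ cannot be taken too narrow; conversely, the Browning-Heath-Brown bound for $G$ invoked on $(\Na \times [0,1]) \cap \mi$ requires sufficiently many variables to be non-trivial. The delicate technical point is to verify that $n > 2^{k-1}(2k-1)$ is simultaneously sufficient for the joint Weyl inequality outside $\Na$ and for the $G$-only treatment inside $\Na$, so that the two minor arc contributions together produce a power saving $O(X^{n-k-d-\nu})$ for some $\nu > 0$. Tracking the exponents through the $2^{k-1}$-th moment that the joint differencing gives rise to, and ensuring the $\h$-averages feed back compatibly, is where the bookkeeping is likely to be most delicate.
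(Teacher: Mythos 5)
Your overall architecture matches the paper's: a two-variable circle method, a first dissection according to the approximability of $\alpha$ alone, $d = k-1$ joint Weyl differencings that kill $G$ and leave a phase that is linear and separable in the $x_i$ because $F$ is diagonal, and a standard major arc analysis. However, as written the proposal has two genuine gaps. First, your treatment of the region where $\alpha$ is well approximated but $(\alpha,\beta)$ is still minor is wrong as stated: you propose to ``combine major arc estimates for $F$ with a Browning--Heath-Brown bound for $\sum_{\x} e(\beta G(\x))$'', but $S(\alpha,\beta)$ is a single sum over $\x$ with phase $\alpha F(\x)+\beta G(\x)$ and does not factor into an $F$-sum times a $G$-sum. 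The correct tool (used in the paper) is the Browning--Heath-Brown subdivision lemma applied to the full two-variable sum: if $\alpha$ has denominator $q \leq X^{\theta}$, then either $|S(\alpha,\beta)|$ is already small, or $\beta$ admits a rational approximation with denominator $qr$ for some small $r$. This dichotomy, not a factorisation, is what defines the two-dimensional major arcs and disposes of the $\beta$-integration.

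Second, the quantitative heart of the improvement is missing. After $k-1$ differencings the phase in $x_i$ is $\alpha c_i h_{1,i}\cdots h_{k-1,i}\,\ell(x_i)$; if you estimate the resulting linear sums generically (e.g.\ by one more Cauchy--Schwarz, which is effectively what Brandes--Parsell do), you land back at the exponent $2^{k}$ and the bound $2^k k$. The step that lets you stop at $2^{k-1}$ is the estimation of $\sum_{h_1,\dots,h_{k-1}} \min\of{X, \norm{c\, h_1\cdots h_{k-1}\alpha}^{-1}}$ using the multiplicative structure of the coefficient $h_1\cdots h_{k-1}$ together with a divisor bound (Vaughan's standard lemma), which recovers the full saving $X^{k-\theta+\varepsilon}$ on the minor arcs for $\alpha$. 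You correctly sense that ``saving one differencing'' is the source of the gain, but you do not say how the linear sums are to be handled so that no loss is incurred at that final stage; without this the claimed exponent $n\theta/2^{k-1}$ in the pointwise bound for $S$, and hence the threshold $n > 2^{k-1}(2k-1)$, cannot be reached. Relatedly, the balance you flag as ``delicate bookkeeping'' — choosing the parameters $\theta$, $\eta$, $\kappa$ so that the arc volumes, the pointwise savings, and the convergence of the singular series and integral are simultaneously compatible with $n > 2^{k-1}(2k-1)$ — is precisely where the theorem is proved, and it is entirely deferred in your write-up.
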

Here, as will be shown below, the constant is $\Constant_{F, G} = \chi_{\infty} \prod_p \chi_p$, where $\chi_{\infty}$ and $\chi_p$ can be interpreted as the volume of the solution set of the system $F(\x) = G(\x) = 0$ in the real and $p$-adic unit cubes, respectively.

In particular, for the case of one diagonal cubic and one quadratic form, this beats \cref{eq:n_0_32_BP} by four variables:
\begin{corollary}
    Let $C, Q \in \Z[x_1, \dots, x_n]$ be a pair of non-singular forms, where $C$ is cubic and diagonal of the shape 
    \begin{equation*}
       C(\x) = C(x_1, \dots, x_n) = c_1 x_1^3 + \dots + c_n x_n^3,
    \end{equation*}
    and $Q$ is quadratic. Suppose that
    \begin{equation*}
        n > 20.
    \end{equation*}
    Then, for some $\nu > 0$, we have
    \begin{equation*}
        N_{C, Q}(X) = X^{n-5} \of{\Constant_{C, Q} + O\of{X^{-\nu}}},
    \end{equation*}
    where $\Constant_{C, Q} \geq 0$.
\end{corollary}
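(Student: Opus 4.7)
The Corollary is the specialisation of Theorem~\ref{thm:main} to $k=3$ and $d = k-1 = 2$: in that case \cref{eq:n20} reads $n > 2^{2}(2\cdot 3-1) = 20$ and the exponent in the asymptotic formula becomes $n-k-d = n-5$, exactly matching the statement. Hence the substance lies entirely in Theorem~\ref{thm:main}, and I describe how I would approach \emph{that} proof.

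The natural framework is the Hardy--Littlewood circle method with a two-dimensional minor-arc parameter. Orthogonality gives
\begin{equation*}
   N_{F,G}(X) = \int_{\T^2} \F(\alpha,\beta)\,\diff\alpha\,\diff\beta, \qquad \F(\alpha,\beta) = \sum_{\x \in \sq{-X,X}^n \cap \Z^n} e\of{\alpha F(\x) + \beta G(\x)},
\end{equation*}
where $\F$ factorises coordinate-wise in the $\alpha$-aspect because $F$ is diagonal, but remains fully coupled in the $\beta$-aspect. Dissect $\T^2$ into major arcs $\Ma$ and minor arcs $\mathfrak{m}$ at a height that is a small power of $X$. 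The major-arc analysis proceeds along the standard lines of \cite{brandes_hasse_2021, browning_forms_2017} and isolates the main term $\Constant_{F,G}\, X^{n-k-d}$ together with a power-saving error, so the entire difficulty reduces to the minor-arc bound $\int_{\mathfrak{m}}\abs{\F}\,\diff\alpha\,\diff\beta \ll X^{n-k-d-\nu}$.

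The minor arcs are split further according to whether $\alpha$ is major or minor. In the regime where $\alpha$ is minor, Weyl's inequality applied to the diagonal sum delivers a pointwise saving $X^{1-\sigma+\eps}$ with $\sigma = 2^{1-k}$ on a controlled subset of variables, while the remaining variables are absorbed by $L^{p}$ mean-value estimates for the $G$-contribution; the complementary regime (major $\alpha$, minor $\beta$) is handled by a Birch--Davenport-type pointwise bound on $G$ paired with $L^{2}$-control on the $F$-sum. Running these estimates with the straightforward parameter choices reproduces the Brandes--Parsell threshold $n > 2^{k}k = 2^{k-1}(2k)$. The improvement to $n > 2^{k-1}(2k-1)$, a saving of $2^{k-1}$ variables, should come from a more frugal use of the Weyl inequality in the minor-$\alpha$ regime: one differences $F$ one step fewer than the textbook iteration, retaining a higher-degree companion sum whose interaction with the $G$-contribution gains exactly one variable in the factor multiplying $2^{k-1}$. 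The main obstacle I anticipate is the bookkeeping of this refined Weyl step: one must track the auxiliary congruence conditions arising from the interrupted differencing, verify that they admit the desired mean-value bound when coupled to the $G$-sum, and check that the saving transfers cleanly to the mixed minor-arc regime without being eroded elsewhere.
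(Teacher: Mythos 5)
Your derivation of the Corollary from Theorem~\ref{thm:main} is exactly the paper's (implicit) proof: substituting $k=3$, $d=2$ turns \cref{eq:n20} into $n>20$ and the exponent into $n-5$, and nothing more is needed. The remainder of your proposal is a speculative sketch of the proof of the theorem itself rather than of the corollary; for the record, the paper's actual saving does not come from differencing one step fewer, but from the observation that the $d=k-1$ differencing steps needed to remove $G$ simultaneously reduce the diagonal degree-$k$ form to a linear polynomial in each variable, so the sharp linear exponential-sum estimate applies with denominator $2^{d}=2^{k-1}$ instead of $2^{k}$.
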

While our methods, combined with those of \cite[\S 2]{brandes_hasse_2021}, would work also for $d < k-1$, this would give
\begin{equation*}
    n > 2^{k-1}(k+d),
\end{equation*}
which is weaker than \cref{eq:n_0_kd_BP} for all $d \leq k-3$ (and equal for $d = k-2$).

\paragraph{Outline of paper}
This paper revisits a special case of \cite{brandes_hasse_2021} and obtains an improvement using only standard circle method results. By avoiding technical complications, the argument becomes largely self-contained, which may be helpful for readers looking for an accessible entry point or a basis for future work.

In \cref{sec:overview}, we give an overview of the proof of \cref{thm:main}, before collecting the necessary Weyl type results in \cref{sec:weyl}. \cref{sec:minor,sec:major} then contain the circle method arguments as in \cite{browning_rational_2015, browning_forms_2017, brandes_hasse_2021} that make up the proof of the theorem. 

\paragraph{Notation}
We will use the following conventions throughout:
\begin{itemize}
    \item For a vector $\x \in \R^m$, define $\abs{\x} := \norm{\x}_{\infty} = \max_{i \in \set{1, \dots, m}} \abs{x_i}$. 
     \item For $\alpha \in \R$, let $e(\alpha) = e^{2 \pi i \alpha}$ and $e_q(\alpha) = e(\alpha/q)$ whenever $q \in \Z$. 
    \item For $\beta\in \R$, write $\norm{\beta} := \min_{n \in \Z} \abs{\beta - n}$. 
    \item Let $\T = \R/\Z$. If $f: \T^m\to \C$ is integrable, define
    \begin{equation*}
        \oint f(\bm{\xi}) \diff \bm{\xi} := \int_{\T^m} f(\bm{\xi}) \diff \bm{\xi},
    \end{equation*}
    where the dimension $m$ is always clear from the context.
    \item When $\varepsilon$ appears, the statement in question is claimed to hold for all sufficiently small numbers $\varepsilon$, and the value of this is permitted to change from line to line.
    \item The quantity $X$ is a large positive number. 
    \item The implied constants in the Landau and Vinogradov notations are always allowed to depend on every parameter but $X$, unless otherwise specified. 
\end{itemize}

\section{Overview of paper}\label{sec:overview}
\cref{thm:main} will be proved using the Hardy-Littlewood circle method, following the arguments of \cite[Theorem 1.3]{brandes_hasse_2021}. As mentioned in the introduction, the goal is to obtain an asymptotic formula for the quantity
\begin{equation*}
    N_{F, G}(X) = \sum_{\substack{\x \in  [-X, X]^n \cap \Z^n \\
    F(\x) = G(\x) = 0}} 1.
\end{equation*}
Using the identity
\begin{equation*}
    \oint e(\alpha n) \diff \alpha = 
    \begin{cases}
        1 & \quad \text{if } n = 0, \\
        0 & \quad \text{if } n \in \Z \setminus\set{0},
    \end{cases}
\end{equation*}
we can reformulate the sum as
\begin{equation*}
     N_{F, G}(X) = \oint S(\bm{\alpha}) \diff \bm{\alpha}.
\end{equation*}
Here, for each $\bm{\alpha} = (\alpha_k, \alpha_d) \in \T^2$, we define
\begin{equation*}
    S(\bm{\alpha}) = S(\alpha_k, \alpha_d; X) := \sum_{\x \in [-X, X]^n \cap \Z^n} e \of{\alpha_k F(\x) + \alpha_d G(\x)}.
\end{equation*}

The strategy is now to split the region of integration $\T^2$ into two sets
\begin{equation*}
    \T^2 = \Ka \sqcup \ki,
\end{equation*}
and then show an asymptotic formula
\begin{equation*}
    \int_{\Ka} S(\bm{\alpha}) \diff \bm{\alpha} \sim C_X X^{n-k-d }
\end{equation*}
as $X \to \infty$, with $C_X$ a product of local densities, together with a bound
\begin{equation*}
    \int_{\ki} S(\bm{\alpha}) \diff \bm{\alpha} = O\of{X^{n-k-d- \nu}}
\end{equation*}
for some $\nu > 0$. In fact, this is done in two steps: first we define a one-dimensional set of major and minor arcs for $\alpha_k$ of the form
\begin{equation*}
    \T = \Ma \sqcup \mi,
\end{equation*}
and then we proceed to, inside $\Ma \times \T$, define a two-dimensional set of major and minor arcs so that
\begin{equation*}
    \T^2 = \of{\Na \sqcup \nn} \sqcup \of{\mi \times \T}.
\end{equation*}
Using classic Weyl differencing as presented in \cref{sec:weyl}, we show in \cref{sec:minor} that the contributions from $\mi$ and $\nn$ are suitably small, and are left to deal with $\Na$ in \cref{sec:major}. 

The major arcs for $\alpha_k$ are defined as follows: 
\begin{definition}\label{def:major_arcs}
    For a parameter $\theta$, let $\Ma(\theta)$ be the set of $\alpha_k \in \T$ with the property
    \begin{equation*}
        \norm{\alpha_k q} \leq X^{-k + \theta}
    \end{equation*} 
    for some natural number $q \leq X^{\theta}$. Set $\mi(\theta) = \T \setminus \Ma(\theta)$.
\end{definition}
The major arcs $\Ma(\theta)$ are disjoint for all $\theta \leq 1$, and
\begin{equation}\label{eq:major_volume}
    \text{vol} \Ma(\theta) \ll \sum_{q \leq X^{\theta}} \sum_{\substack{a = 1 \\ (a, q) = 1}}^q \frac{X^{-k + \theta}}{q} \ll X^{-k + 2 \theta}. 
\end{equation}

\subsection{Minor arc contribution}\label{sec:overview_minor}
The minor arcs $\mi(\theta)$ are defined as having either $q > X^{\theta}$ or $\norm{\alpha_k q} X^{k} > X^{\theta}$. To treat these, we use $d$ successive Weyl differencing steps, to remove the degree $d$ term.

For $d$ differencing variables $\w= (w_1, \dots, w_d) \in \Z^d$, define
\begin{equation}\label{eq:p_shape}
    p_{\w}(x) := \partial_{w_1} \cdots \partial_{w_d} x^k = w_1 \cdots w_d \ell_{\w}(x).
\end{equation}
where $\ell_{\w}(x)$ is a linear polynomial whose leading coefficient is independent of $\w$. This means that 
\begin{equation*}
    \partial_{w_1} \cdots \partial_{w_d} \of{\alpha_k F(\x) + \alpha_d G(\x)} = \alpha_k \sum_{i = 1}^n c_i p_{\w}(x_i) + \alpha_d c,
\end{equation*}
with $c$ a constant independent of $\x$.
    
For $\h_1, \dots \h_d \in \Z^n$, set $\h^{(i)}:= (h_{1, i}, \dots, h_{d, i})$ for each $i = 1, \dots, n$. Vaughan's proof of \cite[Lemma 2.3]{vaughan_hardy-littlewood_2003} shows that for suitable sets $I(\h^{(i)}) \subseteq [-X, X] \cap \Z$, after $d$ differencing steps,
\begin{equation*}
    \abs{S(\bm{\alpha})}^{2^d} \ll X^{\of{2^{d}-d-1}n} \sum_{\abs{\h_1} \leq X} \dots \sum_{\abs{\h_d} \leq X} \abs{\sum_{x_1 \in I(\h^{(1)})} \cdots \sum_{x_n \in I(\h^{(n)})} e \of{\alpha_k \of{ \sum_{i = 1}^n c_i p_{\h^{(i)}}(x_i)} }}.
\end{equation*}
This means that the question of estimating $\abs{S(\bm{\alpha})}$ is turned into the task of studying the inner sum above. Unless $\alpha_k$ is close to a rational number with small denominator, classic exponential sum estimates from \cref{sec:weyl} can be used to get a satisfactory bound for the contribution to the integral, as will be done in \cref{lemma:minor_mi}.

\subsection{Major arc contribution}\label{sec:overview_major}
To estimate the contribution from $\Ma(\theta)$, we use the approach of \cite{brandes_hasse_2021}, defining a two-dimensional set of major and minor arcs $\Na(\eta)$ and $\nn(\eta)$ with the help of a parameter $\eta$. As in the previous step, we are able to say in \cref{lemma:subdivision} that unless $\alpha_d$ is close to a rational number with small denominator, the contribution to the integral is suitably small; a precise estimate is shown in \cref{lemma:minor_nn}. What is left is finally the set $\Na(\eta)$ where both $\alpha_k$ and $\alpha_d$ are well approximated, which is treated in \cref{sec:major}.

We wish to prove an asymptotic formula of the form
\begin{equation*}
    \int_{\Na(\eta)} S(\bm{\alpha}) \diff \bm{\alpha} = X^{n-k-d} \chi_{\infty} \prod_p \chi_p  + O\of{X^{n-k-d-\nu}}
\end{equation*}
for some $\nu > 0$. Here, 
\begin{equation*}
    \chi_{\infty} := \int_{\R^2} J(\bm{\gamma}) \diff \bm{\gamma},
\end{equation*}
where for each $\bm{\gamma} = (\gamma_k, \gamma_d) \in \R^2$, we define
\begin{equation*}
    J(\bm{\gamma}) := \int_{[-1, 1]^n} e \of{\gamma_k F(\bm{\xi}) + \gamma_d G(\bm{\xi})} \diff \bm{\xi}.
\end{equation*}
Moreover,
\begin{equation*}
    \chi_p := \lim_{h \to \infty} \of{p^{h}}^{2-n} \Gamma(p^h)
\end{equation*}
with
\begin{equation*}
    \Gamma(q) := \# \set{\x \in (\Z/q\Z)^n: F(\x) \equiv G(\x) \equiv 0 \mod q}. 
\end{equation*}

\section{Weyl estimates}\label{sec:weyl}
Let $\ell \in \Z[x]$ be a linear polynomial and let $\Interval \subseteq [-X, X]$ be an interval. Set
\begin{equation*}
    \varphi(\alpha) = \varphi(\alpha; \Interval) := \sum_{x \in \Interval \cap \Z} e \of{\alpha \ell(x)}.
\end{equation*}
As in \cref{def:major_arcs}, define the major arcs $\Ma(\theta)$ and $\mi(\theta) = \T \setminus \Ma(\theta)$. We want to show that the sum $\varphi(\alpha)$ is small when $\alpha$ lies on the minor arcs.
\begin{lemma}\label{lemma:weyl1}
    Suppose that $\alpha \in \mi(\theta)$ for some $\theta \leq 1$. Then 
    \begin{equation*}
        \sum_{h = 1}^{X^{k-1}} \abs{\varphi(h \alpha)} \ll X^{k - \theta + \varepsilon}.
    \end{equation*}
\end{lemma}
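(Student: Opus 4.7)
The plan is to reduce the sum $\sum_{h \leq X^{k-1}} \abs{\varphi(h\alpha)}$ to a classical sum of the form $\sum_m \min(X, \norm{m\alpha}^{-1})$, and then combine Dirichlet's approximation theorem with a standard circle-method estimate. Writing $\ell(x) = cx + b$ with $c \in \Z \setminus \set{0}$, the sum $\varphi(h\alpha)$ is a geometric progression in $e(h\alpha c)$, so
\begin{equation*}
    \abs{\varphi(h\alpha)} \ll \min\of{X, \norm{hc\alpha}^{-1}}.
\end{equation*}
After the substitution $m = hc$ (and relaxing the condition that $m$ be divisible by $c$, which only costs a factor depending on $\ell$), this reduces matters to bounding $\sum_{m \leq cX^{k-1}} \min(X, \norm{m\alpha}^{-1})$.

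Next, I would invoke Dirichlet's approximation theorem at level $Q = X^{k-\theta}$ to produce coprime integers $a, q$ with $1 \leq q \leq Q$ and $\abs{q\alpha - a} \leq Q^{-1}$. Since $\alpha \in \mi(\theta)$, the denominator must satisfy $q > X^\theta$: otherwise we would have both $q \leq X^\theta$ and $\norm{q\alpha} \leq X^{-k+\theta}$, placing $\alpha$ in $\Ma(\theta)$ and contradicting the hypothesis.

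With such a rational approximation in hand, I would apply the standard estimate (e.g.\ Lemma 2.2 of Vaughan's \emph{The Hardy--Littlewood Method})
\begin{equation*}
    \sum_{m \leq N} \min\of{X, \norm{m\alpha}^{-1}} \ll \of{\frac{N}{q} + 1}\of{X + q \log q}
\end{equation*}
with $N \ll X^{k-1}$. The four resulting terms $NX/q$, $N\log q$, $X$ and $q\log q$ are then estimated individually. Exploiting $X^\theta < q \leq X^{k-\theta}$ together with $\theta \leq 1$ (which ensures $X^{k-1} \leq X^{k-\theta}$), each term is $\ll X^{k-\theta+\varepsilon}$, yielding the claim.

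The argument is entirely standard and presents no serious obstacle; the only bookkeeping point is that the leading coefficient $c$ of $\ell$ is a fixed non-zero integer, so that passing from $h$ to $m = hc$ only costs a multiplicative constant depending on $\ell$. In the intended application to the sum arising after Weyl differencing in \cref{sec:overview_minor}, the relevant linear polynomials $\ell_{\h^{(i)}}$ have leading coefficient independent of the differencing variables (and of $X$), so the implied constants in the final bound depend only on $k$ and the coefficients of $F$.
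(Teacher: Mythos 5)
Your proposal is correct and follows the same route the paper intends: reduce $\abs{\varphi(h\alpha)}$ to $\min\of{X, \norm{hc\alpha}^{-1}}$ via the geometric series, use Dirichlet's theorem at level $X^{k-\theta}$ together with the minor-arc condition to force $X^{\theta} < q \leq X^{k-\theta}$, and then apply the standard reciprocal-sum estimate (Vaughan, Lemma 2.2); the four resulting terms are each $\ll X^{k-\theta+\varepsilon}$ exactly as you compute. This matches the paper's (one-line) proof, merely with the details written out.
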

\begin{proof}
    This is shown using Dirichlet's theorem and \cite[Lemma 2.2]{vaughan_hardy-littlewood_2003}.
\end{proof}

Together with a standard divisor bound, \cref{lemma:weyl1} proves the following result.
\begin{lemma}\label{lemma:weyl2}
    Suppose that $\alpha \in \mi(\theta)$ for some $\theta \leq X$. Then
    \begin{equation*}
        \sum_{\substack{\abs{h_1}, \dots, \abs{h_{k-1}} \leq X \\ h_1 \cdots h_{k-1} \neq 0}} \abs{\varphi (h_1 \cdots h_{k-1} \alpha)} \ll X^{k - \theta + \varepsilon}.
    \end{equation*}
\end{lemma}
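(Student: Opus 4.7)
The plan is to reduce the $(k-1)$-dimensional sum to the single-variable sum of \cref{lemma:weyl1} by collapsing the product $h_1 \cdots h_{k-1}$ into a single integer variable $m$, and then invoking that lemma.

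First, since $\ell$ has integer (hence real) coefficients, complex conjugation gives $\varphi(-\alpha) = \overline{\varphi(\alpha)}$, so $\abs{\varphi(m\alpha)}$ depends only on $\abs{m}$. This lets me fold the sum over the $2^{k-1}$ sign patterns of $(h_1, \dots, h_{k-1})$ onto tuples of positive integers at the cost of a bounded multiplicative factor, so the restriction $h_1 \cdots h_{k-1} \neq 0$ becomes the condition that each $h_i \geq 1$.

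Next, I would group the resulting terms by the value of $m = h_1 \cdots h_{k-1} \in \set{1, \dots, X^{k-1}}$. The number of ordered factorisations of $m$ into $k-1$ positive integers is, by definition, the generalised divisor function $d_{k-1}(m)$, and the standard divisor bound $d_{k-1}(m) \ll m^{\varepsilon} \ll X^{\varepsilon}$ yields
\begin{equation*}
    \sum_{\substack{\abs{h_1}, \dots, \abs{h_{k-1}} \leq X \\ h_1 \cdots h_{k-1} \neq 0}} \abs{\varphi(h_1 \cdots h_{k-1} \alpha)} \ll X^{\varepsilon} \sum_{m=1}^{X^{k-1}} \abs{\varphi(m\alpha)}.
\end{equation*}
To the right-hand side, \cref{lemma:weyl1} applies directly and supplies the bound $\ll X^{k-\theta+\varepsilon}$, after absorbing the extra $X^{\varepsilon}$ into a redefined $\varepsilon$.

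There is no genuine obstacle once \cref{lemma:weyl1} is in place: the reduction consists only of a trivial sign symmetry and a standard divisor bound, and all the analytic content is already packaged inside \cref{lemma:weyl1}. The only point requiring mild care is that collapsing the product sum does inflate by the multiplicity $d_{k-1}(m)$, but the uniform divisor estimate makes this cost negligible on the scale $X^{\varepsilon}$.
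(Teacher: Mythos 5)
Your argument is correct and matches the paper's intended proof: the paper explicitly states that \cref{lemma:weyl2} follows from \cref{lemma:weyl1} together with a standard divisor bound, which is precisely your reduction via grouping by $m = h_1\cdots h_{k-1}$ and bounding the multiplicity by $d_{k-1}(m) \ll X^{\varepsilon}$. The sign-folding step and the absorption of the extra $X^{\varepsilon}$ are handled correctly, so nothing further is needed.
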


\section{Minor arc contribution}\label{sec:minor}
\cref{lemma:weyl2} can be used to bound the size of $S(\bm{\alpha})$ on the minor arcs.
\begin{lemma}\label{lemma:ch}
    For every $\theta \in (0, 1]$ and every $\alpha_d \in \T$, we have
    \begin{equation*}
        \sup_{\alpha_k \in \mi(\theta)} \abs{S(\alpha_k, \alpha_d)} \ll X^{n - \frac{1}{2^d} n \theta + \varepsilon}.
    \end{equation*}
\end{lemma}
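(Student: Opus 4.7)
The plan is to execute the Weyl-differencing programme sketched in \cref{sec:overview_minor}. Starting from the post-differencing inequality displayed there,
\begin{equation*}
    \abs{S(\bm{\alpha})}^{2^d} \ll X^{(2^d - d - 1)n} \sum_{\abs{\h_1} \leq X} \cdots \sum_{\abs{\h_d} \leq X} \abs{\sum_{x_1 \in I(\h^{(1)})} \cdots \sum_{x_n \in I(\h^{(n)})} e\of{\alpha_k \sum_{i=1}^n c_i p_{\h^{(i)}}(x_i)}},
\end{equation*}
two features are already in place: because $G$ has degree exactly $d$, its $d$-fold difference is a constant in $\x$ and so $\alpha_d$ no longer appears on the right-hand side; and because $F$ is diagonal, the polynomial under the exponential is additively separable in the coordinates of $\x$.

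Using this separability, the inner sum factorises into a product over $i$ of one-dimensional exponential sums of the type $\varphi$ introduced in \cref{sec:weyl}. Interchanging the outer sum over $(\h_1, \dots, \h_d) \in (\Z^n)^d$ with the product over $i$ then yields
\begin{equation*}
    \abs{S(\bm{\alpha})}^{2^d} \ll X^{(2^d - d - 1)n} \prod_{i=1}^n \sum_{\abs{\h^{(i)}} \leq X} \abs{\varphi(c_i h_{1,i}\cdots h_{d,i}\, \alpha_k;\, I(\h^{(i)}))},
\end{equation*}
with $\h^{(i)} = (h_{1,i}, \dots, h_{d,i}) \in \Z^d$. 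This bound is manifestly uniform in $\alpha_d$, which is exactly what the supremum in the statement requires.

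The next step is to bound each of the $n$ inner sums via \cref{lemma:weyl2}, whose hypothesis $d = k-1$ matches the standing assumption in \cref{thm:main}. The tuples with $h_{1,i}\cdots h_{d,i} = 0$ number $O(X^{d-1})$ and each contributes at most $X$, giving $O(X^d) = O(X^{k-1})$, which is dominated by $X^{k-\theta+\eps}$ for $\theta \leq 1$. For the non-degenerate tuples, \cref{lemma:weyl2} delivers the bound $\ll X^{k-\theta+\eps}$ per factor; the fixed nonzero integer $c_i$ is absorbed by the substitution $h_{1,i} \mapsto c_i h_{1,i}$, which only enlarges the summation range by the constant factor $\abs{c_i}$, harmless inside the implied constant. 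Multiplying the $n$ factors together and using $k = d+1$ reduces the total exponent to $n(2^d - \theta + \eps)$, and taking the $2^d$-th root yields the stated bound.

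The one step I would inspect most carefully is the rescaling argument: $\alpha_k \in \mi(\theta)$ does not on its own imply $c_i \alpha_k \in \mi(\theta)$, so one must either reroute through Dirichlet approximation for $\alpha_k$ or, as above, absorb $c_i$ into the summation variable and rely on the fact that the leading coefficient of $\ell_{\h^{(i)}}$ (and hence of $c_i \ell_{\h^{(i)}}$) is independent of $\h^{(i)}$, so that the implied constants in \cref{lemma:weyl2} remain uniform. Beyond this bookkeeping, the proof is a direct consequence of the separability afforded by the diagonal structure of $F$ and the one-variable estimate of \cref{lemma:weyl2}.
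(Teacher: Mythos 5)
Your argument is correct and follows essentially the same route as the paper: the same post-differencing inequality, the same factorisation into one-dimensional sums exploiting the diagonality of $F$, the same split into degenerate and non-degenerate tuples $\h^{(i)}$, and the same application of \cref{lemma:weyl2} followed by taking the $2^d$-th root. The only cosmetic difference is that the paper reduces the product over $i$ to a single maximal factor via the inequality $\abs{a_1 \cdots a_m} \leq \abs{a_1}^m + \dots + \abs{a_m}^m$ whereas you bound each of the $n$ factors uniformly and multiply; both are valid, and your explicit remark on absorbing the coefficients $c_i$ addresses a point the paper leaves implicit.
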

\begin{proof}
    This goes along the lines of \cite[\S 5]{brandes_hasse_2021}. Recall from the discussion in \cref{sec:overview_minor} that one can estimate the size of $S(\bm{\alpha})$ by
    \begin{equation*}
        \abs{S(\bm{\alpha})}^{2^d} \ll X^{\of{2^{d}-d-1}n} \sum_{\abs{\h_1} \leq X} \dots \sum_{\abs{\h_d} \leq X} \abs{\sum_{x_1 \in I(\h^{(1)})} \dots \sum_{x_n \in I(\h^{(n)})} e \of{\alpha_k \of{ \sum_{i = 1}^n c_i p_{\h^{(i)}}(x_i)} }}
    \end{equation*}
    for $\h_1, \dots \h_d \in \Z^n$. Here, $p$ is as in \cref{eq:p_shape}, there are $d$-dimensional differencing variables $\h^{(i)}:= (h_{1, i}, \dots, h_{d, i})$ for each $i = 1, \dots, n$, and the $I(\h^{(i)}) \subseteq [-X, X] \cap \Z$ are the sets appearing in \cite[Lemma 2.3]{vaughan_hardy-littlewood_2003}. 
  
    Define
    \begin{align*}
        f(\alpha; \h) &:= \sum_{x \in I(\h)} e \of{\alpha p_{\h}(x)}, \quad \alpha \in \R, \ \h \in \Z^d, \\
        g(\alpha; \BBox) &:= \sum_{\h \in \BBox} \abs{f(\alpha; \h)}, \quad \BBox \subseteq \Z^d, \\ 
        g(\alpha) &:= g\of{\alpha; [-X, X]^d \cap \Z^d}.
    \end{align*}
    Now the sum above can be reformulated. With the general inequality
    \begin{equation*}
        \abs{a_1 \cdots a_m} \leq \abs{a_1}^m + \dots + \abs{a_m}^m \quad \forall a_1, \dots, a_m \in \C,
    \end{equation*}
    applied to the product of the $g(\alpha_k c_i)$, one obtains
    \begin{equation}\label{eq:S_4_intermediate}
        \abs{S(\alpha_k, \alpha_d)}^{2^d} \ll X^{(2^d-d-1)n} \prod_{i = 1}^n g \of{\alpha_k c_i} \ll X^{(2^d-d-1)n} \sum_{i = 1}^n g \of{\alpha_k c_i}^n.
    \end{equation}
    
    The shape of the terms in \cref{eq:p_shape} means that the sum defining $f$ is trivial if $h_1 \cdots h_d= 0$, which suggests that we should treat any terms with $h_i = 0$ for some $i$ separately in the exponential sums above in order to get a decent bound. To this end, the box is split up into a sum over the terms with $h_1 \cdots h_d= 0$ and the complement of this. Set
    \begin{equation*}
        \Zero := \set{\h \in [-X, X]^d \cap \Z^d: h_1 \cdots h_d = 0} \quad \text{and} \quad \Zero^c := [-X, X]^d \cap \Z^d \setminus \Zero.
    \end{equation*}
    Clearly
    \begin{equation*}
        \abs{\Zero } \ll X^{d-1},
    \end{equation*}
    since at least one of the components must be zero, and for $\h \in \Zero$, we have
    \begin{equation*}
        p_{\h}(x) =h_1 \cdots h_d \ell_{\h}(x) = 0,
    \end{equation*}
    which means that, for such $\h$, the sum defining $f$ is
    \begin{equation*}
        f(\alpha, \h) = \sum_{\x \in I(\h)} 1 \ll X.
    \end{equation*}
    Hence
    \begin{equation*}
        g\of{\alpha_k c_i} \ll X^{d-1} \max_{\h \in \Zero} \abs{f\of{\alpha_k c_i; \h}} + g\of{\alpha_k c_i; \Zero^c} \ll X^d + g\of{\alpha_k c_i; \Zero^c}. 
    \end{equation*}
    
    The bound in \cref{eq:S_4_intermediate} then becomes
    \begin{align*}
        \abs{S(\alpha_k, \alpha_d)}^{2^d} & \ll X^{(2^d - d- 1)n} \sum_{i = 1}^n \of{X^d + g\of{\alpha_k c_i; \Zero^c} }^n \\
        & \ll X^{(2^d -1)n} + X^{(2^d - d- 1)n} \sum_{i = 1}^n g\of{\alpha_k c_i; \Zero^c}^n \\
        & \ll X^{(2^d -1)n} + X^{(2^d - d- 1)n} \max_{i} g\of{\alpha_k c_i; \Zero^c}^{n}.
    \end{align*}
    There must now be an $i \in \set{1, \dots, n}$ such that
    \begin{equation*}
        \abs{S(\alpha_k, \alpha_d)}^{2^d} \ll X^{(2^d -1)n} + X^{(2^d - d- 1)n} \of{\sum_{\h \in \Zero^c} \abs{f\of{\alpha_k c_i; \h}}}^{ n}. 
    \end{equation*}
    \cref{lemma:weyl2} gives
    \begin{equation*}
        \sup_{\alpha_k \in \mi(\theta)} \sum_{\h \in \Zero^c} \abs{f\of{\alpha_k c_i;\h}} \ll X^{k - \theta + \varepsilon} = X^{d + 1 - \theta + \varepsilon},
    \end{equation*}
    which in particular means that
    \begin{equation*}
        \sup_{\alpha_k \in \mi(\theta)} \abs{S(\alpha_k, \alpha_d)}^{2^d} \ll X^{(2^d -1)n} + X^{(2^d - d- 1)n} \of{X^{d+1 - \theta + \varepsilon}}^{n} \ll X^{(2^d - \theta) n + \varepsilon}.
    \end{equation*}
    Now one can take the $2^d$th root of the above to arrive at the desired result.
\end{proof}

\begin{remark}\label{rmk:theta1_bound}
    In particular, with $\theta = 1$, this means that for any $\alpha_k \in \mi(1)$ and any $\alpha_d \in \T$, we have
    \begin{equation*}
        \abs{S(\alpha_k, \alpha_d) } \leq X^{\of{1-\frac{1}{2^d}}n + \varepsilon},
    \end{equation*}
    which implies
    \begin{equation*}
        \oint \int_{\mi(1)} \abs{S(\alpha_k, \alpha_d)} \diff \alpha_k \diff \alpha_d \ll X^{\of{1-\frac{1}{2^d}}n  + \varepsilon}. 
    \end{equation*}
\end{remark}

\begin{lemma}\label{lemma:minor_mi}
    Assume that \cref{eq:n20} holds. For $\theta_* \in (0, 1]$, if
    \begin{equation}\label{eq:theta_star}
        \theta_* > \frac{2^d d}{n - 2^{d+1}},
    \end{equation}
    then there is a $\nu > 0$ such that for all $\theta \in [\theta_*, 1]$, we have
    \begin{equation*}
        \oint \int_{\mi(\theta)}\abs{S(\alpha_k, \alpha_d)} \diff \alpha_k \diff \alpha_d \ll X^{n-k-d - \nu}.
    \end{equation*}
\end{lemma}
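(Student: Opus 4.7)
The plan is to reduce to $\theta = \theta_*$ by monotonicity, and then split $\mi(\theta_*)$ into the very deep minor arcs $\mi(1)$ (where \cref{rmk:theta1_bound} applies) and the transition region $\Ma(1) \setminus \Ma(\theta_*)$, which I would handle by a dyadic decomposition in the parameter $\theta$ together with \cref{lemma:ch}. Since $\Ma(\theta)$ is monotone increasing in $\theta$, one has $\mi(\theta) \subseteq \mi(\theta_*)$ for every $\theta \in [\theta_*, 1]$, so it is enough to prove the bound at $\theta = \theta_*$. On $\mi(1)$, \cref{rmk:theta1_bound} already yields $\ll X^{(1-1/2^d)n + \varepsilon}$, and the hypothesis \cref{eq:n20} (which reads $n > 2^d(k+d)$ when $d = k-1$) gives $n/2^d > k+d$ with strict margin, so this piece is $\ll X^{n-k-d-\nu_0}$ for some $\nu_0 > 0$ after taking $\varepsilon$ sufficiently small.

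For the transition region I would fix a small $\delta > 0$ (to be chosen in the last step) and set $\theta_j := (1+\delta)^j \theta_*$ for $j = 0, 1, \dots, J$, where $J$ is the least index with $\theta_J \geq 1$. Since $\delta$ and $\theta_*$ are fixed, $J$ is a constant independent of $X$. The shells
\[
\Ma(1) \setminus \Ma(\theta_*) \subseteq \bigcup_{j=0}^{J-1} \bigl(\Ma(\theta_{j+1}) \setminus \Ma(\theta_j)\bigr)
\]
each lie inside $\mi(\theta_j)$, so \cref{lemma:ch} with parameter $\theta_j$ gives $\abs{S(\alpha_k, \alpha_d)} \ll X^{n - n\theta_j/2^d + \varepsilon}$ uniformly in $\alpha_d$. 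Combined with the volume bound $\text{vol}(\Ma(\theta_{j+1})) \ll X^{-k + 2(1+\delta)\theta_j}$ from \cref{eq:major_volume}, the contribution of the $j$-th shell is
\[
\oint \int_{\Ma(\theta_{j+1}) \setminus \Ma(\theta_j)} \abs{S(\alpha_k, \alpha_d)} \diff \alpha_k \diff \alpha_d \ll X^{n - k - \theta_j(n/2^d - 2(1+\delta)) + \varepsilon}.
\]

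To finish, I would rewrite \cref{eq:theta_star} as $\theta_*(n/2^d - 2) > d$ and note that this holds strictly; by continuity, I can choose $\delta > 0$ and $\nu > 0$ small enough that $\theta_*(n/2^d - 2(1+\delta)) > d + \nu$. Using $\theta_j \geq \theta_*$, each shell then contributes $\ll X^{n-k-d-\nu+\varepsilon}$; summing the $O(1)$ shells and absorbing $\varepsilon$ yields the required bound. The main obstacle I anticipate is the calibration of $\delta$: it must be strictly positive so that the geometric progression terminates in boundedly many steps, yet small enough that the loss $2(1+\delta)\theta_j$ from the volume estimate does not erase the strict margin provided by \cref{eq:theta_star}. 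This trade-off is exactly the reason the hypothesis appears with the precise denominator $n - 2^{d+1}$ rather than something weaker.
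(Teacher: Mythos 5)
Your argument is correct and matches the paper's proof in all essentials: both isolate $\mi(1)$ via \cref{rmk:theta1_bound} and cover the remaining region $\Ma(1)\setminus\Ma(\theta_*)$ by a bounded number of shells, bounding each shell by $\mathrm{vol}\,\Ma \cdot \sup|S|$ using \cref{eq:major_volume} and \cref{lemma:ch}, with \cref{eq:theta_star} supplying exactly the margin needed to absorb the small loss from the step size. The only (immaterial) difference is that you take a geometric sequence $\theta_j = (1+\delta)^j\theta_*$ where the paper takes an additive one with step controlled by \cref{eq:step_size_theta}.
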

\begin{proof}
    To prove this, as in the proof of \cite[Lemma 3.2]{brandes_hasse_2021}, we use \cref{lemma:ch} in combination with \cref{eq:n20} and the major arcs volume estimate in \cref{eq:major_volume}.

    Construct a decreasing sequence $(\theta_i)$ of the form $1 = \theta_0 > \theta_1 > \dots > \theta_N = \theta_*$. With the requirement for $\theta_*$ in \cref{eq:theta_star}, it is clear that this can be chosen with $N = O(1)$ and 
    \begin{equation}\label{eq:step_size_theta}
        2^{d+1} \of{\theta_{i-1} - \theta_i} < \of{n-2^{d+1}} \theta_* - 2^d d.
    \end{equation}
    Note that
    \begin{equation*}
        \mi(\theta_*) = \mi(\theta_N) = \mi(\theta_0) \cup \bigcup_{i = 1}^N \of{\mi(\theta_i) \setminus \mi( \theta_{i-1})},
    \end{equation*}
    so it remains to show that $\mi(\theta_0)$ and then each set $\mi(\theta_i) \setminus \mi(\theta_{i-1})$ give an acceptable contribution to the integral.

    For $\theta_0 = 1$, combining \cref{rmk:theta1_bound} with the restriction for $n$ in \cref{eq:n20} yields
    \begin{equation*}
        \oint \int_{\mi(\theta_0)} \abs{S(\alpha_k, \alpha_d)} \diff \alpha_k \diff \alpha_d \ll X^{n-k-d- \nu}
    \end{equation*}
    for some suitable $\nu > 0$. 

    Next, for each $i > 0$, the bound for $S(\bm{\alpha})$ on the minor arcs in \cref{lemma:ch} together with the volume estimate \cref{eq:major_volume} on the major arcs gives
    \begin{align*}
        \oint \int_{\mi(\theta_i) \setminus \mi(\theta_{i-1})} \abs{S(\alpha_k, \alpha_d)} \diff \alpha_k \diff \alpha_d &\ll \text{vol} \Ma(\theta_{i-1}) \sup_{\substack{\alpha_k \in \mi(\theta_i) \\ \alpha_d \in \T}} \abs{S(\alpha_k, \alpha_d)} \\
        &\ll X^{-k + 2 \theta_{i-1}} X^{n - \frac{1}{2^d} n \theta_i + \varepsilon}.
    \end{align*}
    Now \cref{eq:step_size_theta} shows that
    \begin{equation*}
        2 \theta_{i-1} - \frac{n}{2^d}\theta_i = 2\of{\theta_{i-1} - \theta_i} - \of{\frac{n}{2^d}-2}\theta_i < - d,
    \end{equation*}
    which means that each set $\mi(\theta_i) \setminus \mi(\theta_{i-1})$ contributes an acceptable amount. 
\end{proof}

\subsection{Two-dimensional split}\label{sec:minor_split}
We have now shown that unless $\alpha_k$ is well approximated by rationals, the contribution to the minor arc integral is small. What can then be said in the other case? As in \cite{browning_forms_2017}, a similar Weyl argument shows that whenever $\alpha_k$ has a good rational approximation $a/q$, unless $\alpha_d$ is well approximated with the same denominator $q$, the corresponding contribution will also be small. The following lemma makes this precise.

\begin{lemma}\label{lemma:subdivision}
    Let $\kappa > 0$ and $\theta \in (0, 1]$. If $\alpha_k \in \Ma(\theta)$, with corresponding rational approximation $a/q$, then for any $\eta$ in the range
    \begin{equation}\label{eq:range_eta}
        0 < \eta \leq 1 - \theta,
    \end{equation}
    one of the following three statements are true:
    \begin{enumerate}[label={(\Alph*)}]
        \item Either \label{opt:1}
        \begin{equation*}
            \abs{S(\bm{\alpha})} \ll X^{n-\kappa \eta + \varepsilon},
        \end{equation*}
        \item or there is a natural number $r \leq X^{(d-1)\eta}$ such that \label{opt:2}
        \begin{equation*}
            \norm{q r \alpha_d} \ll X^{-d + (d-1)\eta + \theta},
        \end{equation*}
        \item or \label{opt:3}
        \begin{equation*}
            n \leq 2^{d-1} \kappa.
        \end{equation*}
    \end{enumerate}
\end{lemma}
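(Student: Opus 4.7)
The plan is to prove the contrapositive: assuming that both \ref{opt:1} and \ref{opt:3} fail, so that $\abs{S(\bm{\alpha})} \gg X^{n-\kappa\eta+\varepsilon}$ while $n > 2^{d-1}\kappa$, I will derive \ref{opt:2}. The argument follows the blueprint of \cite[\S 3]{browning_forms_2017}, simplified by the diagonality of $F$.

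First I would apply $d-1$ rounds of Weyl differencing via \cite[Lemma 2.3]{vaughan_hardy-littlewood_2003}, stopping one step earlier than in the proof of \cref{lemma:ch} so as to preserve a nontrivial dependence on $\alpha_d$. After these $d-1$ differencings, the $F$-part becomes a diagonal quadratic polynomial in $\x$ (diagonal structure being preserved at each step) and the $G$-part becomes a linear form in $\x$, so the inner sum factorises over the coordinates $i=1,\dots,n$ as a product of 1D quadratic exponential sums
\begin{equation*}
    f_i(\w;\bm{\alpha}) = \sum_{x_i \in I_i(\w)} e\of{A_i(\w)x_i^2 + B_i(\w;\bm{\alpha})x_i},
\end{equation*}
with $A_i(\w) = \tfrac{k!}{2}\alpha_k c_i W_i$, where $W_i := w_{1,i}\cdots w_{d-1,i}$, and $B_i(\w;\bm{\alpha}) = \alpha_k c_i W_i R_i(\w^{(i)}) + \alpha_d\lambda_i(\w)$, for integer polynomials $R_i$ and $\lambda_i$ of degree at most $d-1$ in their arguments.

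Next I would truncate the differencing variables to $\abs{\w_j} \le X^\eta$ and, using pigeonholing against the failure of \ref{opt:1}, locate a specific $\w^*$ for which $\prod_i \abs{f_i(\w^*;\bm{\alpha})}$ is of size at least $X^{n - 2^{d-1}\kappa\eta + \varepsilon}$; the hypothesis $n > 2^{d-1}\kappa$ coming from the failure of \ref{opt:3} makes this exponent productive. A positive proportion of the individual 1D sums $f_i(\w^*;\bm{\alpha})$ must then be near-maximal, and for each such $i$ the completion formula for $f_i$, based on the major-arc expansion $\alpha_k = a/q + \beta$ with $q \le X^\theta$ and $\abs{q\beta}\le X^{-k+\theta}$, combined with Poisson summation on the inner Gauss-sum-weighted sum, forces the effective linear coefficient to satisfy $\norm{qB_i(\w^*;\bm{\alpha})} \ll X^{-d+(d-1)\eta+\theta+\varepsilon}$. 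Expanding $B_i$ via $\alpha_k = a/q + \beta$ and noting that $qc_iW_iR_i \cdot (a/q) = ac_iW_iR_i \in \Z$, this condition collapses to $\norm{q\lambda_i(\w^*)\alpha_d} \ll X^{-d+(d-1)\eta+\theta+\varepsilon}$; setting $r := \abs{\lambda_i(\w^*)}$ gives a natural number of size $\ll X^{(d-1)\eta}$ and delivers \ref{opt:2}.

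The hard part will be the quantitative bookkeeping across the 1D analysis. One must track how the assumed savings of $X^{\kappa\eta}$ in the global bound on $\abs{S(\bm{\alpha})}$ redistribute through the $d-1$ differencings and the factorisation over $i$, ensuring that enough individual 1D sums exceed the threshold at which the Poisson argument yields the sharp exponent $-d+(d-1)\eta+\theta$ on the linear coefficient. One must also verify that $\lambda_i$ is not identically zero on the truncated box, which follows from the non-singularity of $G$.
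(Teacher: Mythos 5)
The paper does not reprove this statement at all: its ``proof'' is a one-line citation of \cite[Lemma 6.1]{browning_forms_2017}, so you are attempting to reconstruct the Birch--Browning--Heath-Brown argument from scratch. Your opening moves are the right ones ($d-1$ Weyl differencings, factorisation of the inner sum over coordinates thanks to the diagonality of $F$, the role of $n>2^{d-1}\kappa$), but the sketch has a genuine gap at the decisive quantitative step, in two places.

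First, you cannot simply ``truncate the differencing variables to $\abs{\w_j}\le X^{\eta}$'': Cauchy--Schwarz/Weyl differencing produces shifts ranging over the full box $[-X,X]$. Passing to the shrunk box of side $X^{\eta}$ is exactly where Davenport's shrinking lemma (geometry of numbers applied to the lattice cut out by the multilinear forms; \cite[Lemma 2.2]{browning_forms_2017}) enters, and it is the heart of the argument, not a harmless normalisation. Second, and more seriously, your endgame cannot deliver the stated exponent. A single near-maximal one-dimensional quadratic sum of length $X$ only forces its linear coefficient to lie within $O(X^{-1+\varepsilon})$ (times denominator factors) of a rational; pinning down one $\w^{*}$ and one index $i$ therefore yields at best $\norm{q\lambda_i(\w^{*})\alpha_d}\ll X^{-1+\varepsilon}$, which is far weaker than the required $X^{-d+(d-1)\eta+\theta}$. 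The strong approximation in \labelcref{opt:2} only emerges after one shows that the weak $X^{-1}$-type condition holds for $\gg X^{((d-1)n-2^{d-1}\kappa)\eta-\varepsilon}$ vectors $\h$ in the shrunk box and then applies the lattice-point lemma for systems of linear forms (\cite[Lemma 2.3]{browning_forms_2017}, going back to Davenport and Birch) to the family $\h\mapsto\lambda_i(\h)$; this aggregation step is also where alternative \labelcref{opt:3} genuinely arises, namely as the condition needed for the count of good $\h$ to exceed the contribution of the degenerate locus $\lambda_i(\h)=0$. As written, your argument proves a much weaker conclusion than \labelcref{opt:2}. I would recommend either citing \cite[Lemma 6.1]{browning_forms_2017} as the paper does, or incorporating the shrinking lemma and the final counting/lattice argument explicitly.
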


\begin{proof}
    This is \cite[Lemma 6.1]{browning_forms_2017}.
\end{proof}

In order to exclude \labelcref{opt:3}, we assume from now on that
\begin{equation*}
    n > 2^{d-1} \kappa.
\end{equation*}

\begin{remark}
    Without loss of generality, one can set
    \begin{equation}\label{eq:kappa_eta_theta}
        \kappa \eta = \frac{1}{2^d} n \theta.
    \end{equation}
    To see this, combine the bound on $S(\bm{\alpha})$ in \labelcref{opt:1} with that of \cref{lemma:ch}. 
\end{remark}

Next, the set $\Ma(\theta)$ is partitioned into two sets, depending on properties of $\alpha_d$ as per the subdivision in \cref{lemma:subdivision}. With a careful choice of constant, this yields a two-dimensional set of major arcs $\Na(\eta)$ consisting of those $\bm{\alpha}$ for which \labelcref{opt:2} holds, with complement $\nn(\eta)$ consisting of the remaining $\bm{\alpha}$ where \labelcref{opt:2} fails but \labelcref{opt:1} holds.

\begin{definition}\label{def:najor_arcs}
    For a constant $c >0$, let $\Na(\eta) = \Na(\eta, \theta)$ be the set of $\bm{\alpha} \in \T^2$ with 
    \begin{align*}
        \norm{\alpha_k q} \leq c X^{-k + \theta}, && \norm{\alpha_d qr } \leq c X^{-d + (d-1)\eta + \theta}
    \end{align*}
    for some $1 \leq q \leq X^{\theta}$ and $1 \leq r \leq X^{(d-1)\eta}$. Define $\nn(\eta) = \T^2 \setminus \Na(\eta)$.
\end{definition}

Two remarks are in order:
\begin{enumerate}
    \item Combining the size restrictions on $S(\bm{\alpha})$ in \cref{lemma:ch} and in \labelcref{opt:1} using the equality \cref{eq:kappa_eta_theta} gives
    \begin{equation}\label{eq:S_bound_nn}
        \abs{S(\bm{\alpha})} \ll X^{n - \kappa \eta + \varepsilon}
    \end{equation}
    for all $\bm{\alpha} \in \nn(\eta)$. 
    \item Using the equality \cref{eq:kappa_eta_theta}, we can estimate:
    \begin{equation}\label{eq:volume_bound_Na}
        \begin{split}
            \text{vol} \Na(\eta) &\ll \sum_{1 \leq q \leq cX^\theta} \sum_{1 \leq a \leq q} \frac{X^{-k + \theta}}{q} \sum_{1 \leq r \leq c X^{(d-1) \eta}} \sum_{1 \leq b \leq qr} \frac{X^{-d + (d-1)\eta + \theta}}{qr} \\
            & \ll X^{-k - d + 2(d-1) \eta + 3 \theta} \\
            & \ll X^{-k-d + \of{2(d-1) + \frac{3\cdot 2^d \cdot \kappa }{n}}\eta}.
        \end{split}
    \end{equation}
\end{enumerate}

\subsection{Two-dimensional minor arcs}\label{sec:minor_minor}
We can apply \cref{lemma:minor_mi} and \cref{lemma:subdivision} using the two estimates in \cref{eq:S_bound_nn} and \cref{eq:volume_bound_Na} to bound the integral over $\nn(\eta)$. 
\begin{lemma}\label{lemma:minor_nn}
    Suppose that \cref{eq:n20} holds, together with
    \begin{equation}\label{eq:kappa_n_1}
        \frac{d}{\kappa} + \frac{2^d(d+2)}{n} < 1
    \end{equation}
    and
    \begin{equation}\label{eq:kappa_n_2}
        \frac{2(d-1)}{\kappa} + \frac{3 \cdot 2^d}{n} < 1.
    \end{equation}
    Then for any $\eta$ in the range
    \begin{equation}\label{eq:eta_range_kappa}
        0 < \eta \leq \of{1 + \frac{2^d\kappa}{n}}^{-1},
    \end{equation}
    the integral over the two-dimensional minor arcs can be bounded by
    \begin{equation*}
        \int_{\nn(\eta)} \abs{S(\bm{\alpha})} \diff \bm{\alpha} \ll X^{n-k-d - \nu}
    \end{equation*}
    for some $\nu > 0$.
\end{lemma}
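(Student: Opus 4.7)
The idea is to decompose $\nn(\eta)$ according to the position of $\alpha_k$ with respect to the one-dimensional major arcs, fix $\theta$ via the normalisation \cref{eq:kappa_eta_theta}, that is
\begin{equation*}
    \theta = \frac{2^d \kappa \eta}{n},
\end{equation*}
and treat the two resulting pieces by the tools already at hand. The upper bound \cref{eq:eta_range_kappa} on $\eta$ is precisely $\theta + \eta \leq 1$, which is the hypothesis \cref{eq:range_eta} needed to invoke \cref{lemma:subdivision}, and it also guarantees $\theta \in (0, 1]$. Writing
\begin{equation*}
    \nn(\eta) \subseteq (\mi(\theta) \times \T) \;\cup\; \bigl((\Ma(\theta) \times \T) \setminus \Na(\eta)\bigr),
\end{equation*}
it then suffices to control the integral of $|S(\bm{\alpha})|$ over each of the two pieces on the right.

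On the first piece, $\alpha_d$ is unconstrained, so I would integrate it out trivially and apply \cref{lemma:minor_mi} with $\theta_{*} = \theta$. Its hypothesis \cref{eq:theta_star} translates, through the relation $\theta = 2^d\kappa\eta/n$, into a lower bound on $\eta$ of the form $\eta > dn/(\kappa(n-2^{d+1}))$; a short algebraic manipulation shows that the assumption \cref{eq:kappa_n_1} is exactly what makes this lower bound strictly smaller than the upper bound in \cref{eq:eta_range_kappa}, so the range is non-empty and the hypothesis is met.

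On the second piece, alternative \labelcref{opt:2} of \cref{lemma:subdivision} fails by construction, while \labelcref{opt:3} is ruled out by the standing assumption $n > 2^{d-1}\kappa$ (itself a consequence of \cref{eq:kappa_n_1}), so alternative \labelcref{opt:1} applies and yields the pointwise bound $|S(\bm{\alpha})| \ll X^{n - \kappa\eta + \varepsilon}$ already recorded in \cref{eq:S_bound_nn}. Combining this with the trivial volume bound $\mathrm{vol}(\Ma(\theta) \times \T) \ll X^{-k + 2\theta}$ from \cref{eq:major_volume}, and using $2\theta = 2^{d+1}\kappa\eta/n$, the contribution becomes
\begin{equation*}
    \ll X^{n - k + 2\theta - \kappa\eta + \varepsilon} = X^{n - k - \kappa\eta(1 - 2^{d+1}/n) + \varepsilon}.
\end{equation*}
The main obstacle is then the routine but careful bookkeeping to check that the exponent on the right is at most $n - k - d - \nu$ for some $\nu > 0$: this reduces to $\kappa\eta(1 - 2^{d+1}/n) > d$, which is the same constraint that powered the first piece, and again \cref{eq:n20} and \cref{eq:kappa_n_1} make it available throughout the relevant range of $\eta$. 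Condition \cref{eq:kappa_n_2} is not needed here but will be needed in \cref{sec:major} to control $\mathrm{vol}\,\Na(\eta)$, and is recorded now so that the hypotheses carry over cleanly.
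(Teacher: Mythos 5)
There is a genuine gap: your argument only establishes the bound for $\eta$ near the top of the admissible range, whereas the lemma asserts it for \emph{every} $\eta \in \bigl(0, (1+2^d\kappa/n)^{-1}\bigr]$ --- and it is precisely the small-$\eta$ case that is used afterwards, since $\omega = (d-1)\eta+\theta$ must eventually be taken small in \cref{sec:major}. Note that $\Na(\eta)$ shrinks and hence $\nn(\eta)$ \emph{grows} as $\eta$ decreases, so you cannot reduce small $\eta$ to large $\eta$. Concretely, both halves of your decomposition break down when $\eta$ is small: on the piece $\mi(\theta)\times\T$ you need $\theta = 2^d\kappa\eta/n$ to satisfy \cref{eq:theta_star}, which is a \emph{lower} bound on $\eta$; and on the piece $(\Ma(\theta)\times\T)\setminus\Na(\eta)$ your requirement $\kappa\eta(1-2^{d+1}/n)>d$ is again a lower bound on $\eta$. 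Showing that these lower bounds sit below the upper bound \cref{eq:eta_range_kappa} (which is indeed equivalent to \cref{eq:kappa_n_1}) makes the range of \emph{your} argument non-empty, but does not cover the $\eta$ below those thresholds.

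The missing ingredient is the iterative (telescoping) step, which is exactly what the paper imports from \cite[Lemma 3.4]{brandes_hasse_2021}: one takes a decreasing sequence $\eta_{\max}=\eta_0>\eta_1>\dots>\eta_N=\eta$ with $O(1)$ small steps, handles $\nn(\eta_{\max})$ essentially as you do, and then bounds each increment $\nn(\eta_i)\setminus\nn(\eta_{i-1}) \subseteq \Na(\eta_{i-1})$ by pairing the pointwise bound $X^{n-\kappa\eta_i+\varepsilon}$ from \cref{eq:S_bound_nn} with the \emph{refined} volume bound \cref{eq:volume_bound_Na} on $\Na(\eta_{i-1})$ rather than the crude $X^{-k+2\theta}$ for $\Ma(\theta)\times\T$. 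The step condition for this telescoping to close is $2(d-1)+3\cdot 2^d\kappa/n<\kappa$, i.e.\ exactly \cref{eq:kappa_n_2}. Your closing remark that \cref{eq:kappa_n_2} is not needed in this lemma is therefore the symptom of the gap: it is needed here, and it is needed precisely to run the iteration that extends the bound from $\eta=\eta_{\max}$ down to arbitrarily small $\eta$.
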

\begin{proof}
    This is shown using \cite[Lemma 3.4]{brandes_hasse_2021}, with \cref{lemma:minor_mi} in place of \cite[Lemma 3.2]{brandes_hasse_2021}. We have $\rho = 1$, $t = 2^{-d} n$, and $\sigma = 1$, so that
    \begin{equation*}
        \eta_* = \frac{n}{2^d \kappa} \theta_*.
    \end{equation*}
    To complete the proof, we need to make sure that a $\theta_*$ satisfying \cref{eq:theta_star} can indeed be found (and hence a suitable $\eta_*$). Since \cref{eq:range_eta} is equivalent to \cref{eq:eta_range_kappa} under the assumption \cref{eq:kappa_eta_theta} on $\kappa$ and $\eta$, taking
    \begin{align*}
        \eta_{\max} := \of{1 + \frac{2^d\kappa}{n}}^{-1}, && \theta_{\max} := \frac{2^d\kappa}{n + 2^d\kappa},
    \end{align*}
    gives a permissible $\theta_{\max}$, and now
    \begin{equation*}
        \theta_{\max} > \frac{2^d d}{n-2^{d+1}}
    \end{equation*}
    is equivalent to \cref{eq:kappa_n_1}.
\end{proof}

Suppose now that \cref{eq:n20} holds. It is clear that one can find a $\kappa$ that satisfies $n > 2^{d-1} \kappa$ as well as the hypotheses of \cref{lemma:minor_nn}. First, \cref{lemma:minor_mi} ensures that the contribution from $\mi(\theta)$ is suitably small, and then \cref{lemma:subdivision} guarantees that for any $\eta$, either $\bm{\alpha}$ lies on the major arcs $\Na(\eta)$, or $S(\bm{\alpha})$ is small (in terms of $\kappa$ and $\eta$). Next, by \cref{lemma:minor_nn}, for sufficiently small $\eta$, the contribution from the 2-dimensional minor arcs $\nn(\eta)$ is acceptable. Hence for any sufficiently small $\eta > 0$, the counting function is
\begin{equation}\label{eq:only_major_left}
    N_{F, G}(X) = \int_{\Na(\eta)} S(\bm{\alpha}) \diff \bm{\alpha} + O \of{X^{n - k-d - \nu}}. 
\end{equation}
It remains only to bound the contribution from the major arcs $\Na(\eta)$, which is the objective of the next section.

\section{Major arc contribution}\label{sec:major}
Next, we turn to the analysis of the major arcs. This is performed exactly as in \cite[\S 4]{brandes_hasse_2021}, with the proofs of \cite[Lemmas 4.1 to 4.4]{brandes_hasse_2021} copied to give our \cref{lemma:J,lemma:S}. The changes of notation will be indicated where necessary.

We can begin by enlarging the set of major arcs slightly. 
\begin{definition}
    Define
    \begin{equation}\label{eq:omega}
        \omega = (d-1)\eta + \theta.
    \end{equation}
    With this, let $\Pa(\omega)$ be the set of $\bm{\alpha} \in \T^2$ with the property that there is a natural number $s \leq c ' X^{\omega}$, an $\a = (a_k, a_d) \in \Z^2$ and a $\bm{\gamma} = (\gamma_k, \gamma_d)$ such that
    \begin{equation}\label{eq:alphas_major}
        \alpha_k = \frac{a_k}{s} + \gamma_k \quad \text{and} \quad \alpha_d = \frac{a_d}{s} + \gamma_d,  
    \end{equation}
    with
    \begin{equation*}
        \abs{\gamma_k} = \abs{\alpha_k - \frac{a_k}{s}} \leq c' X^{-k + \omega} \quad \text{and} \quad \abs{\gamma_d} = \abs{\alpha_d - \frac{a_d}{s}} \leq c' X^{-d + \omega}
    \end{equation*}
    for some suitable constant $c'$.
\end{definition}
Two remarks are again in order:
\begin{enumerate}
    \item Looking at \cref{def:najor_arcs}, it is clear that if we set $s = qr$, we can take $c'$ such that $\Na(\eta) \subseteq \Pa(\omega)$. Then assuming \cref{eq:n20}, the result of \cref{lemma:minor_nn} tells us that there is some $\nu > 0$ such that
    \begin{equation*}
        \int_{\Pa(\omega) \setminus \Na(\eta)} S(\bm{\alpha}) \diff \bm{\alpha} \ll \int_{\nn(\eta)} \abs{S(\bm{\alpha})} \diff \bm{\alpha} \ll X^{n - k-d - \nu}.
    \end{equation*}
    \item We can compute directly:
    \begin{equation*}
        \text{vol} \Pa(\omega) \ll \sum_{1 \leq s \leq c' X^{\omega}} \sum_{1 \leq a_k \leq s} c' X^{-k + \omega} \sum_{1 \leq a_d \leq s} c' X^{-d + \omega} \ll X^{-k-d + 5 \omega}.
    \end{equation*}
\end{enumerate}

\begin{definition}
    Define 
   \begin{equation*}
       S(s; \a) = S(s; a_k, a_d) := \sum_{\x \Mod s} e_s \of{a_k F(\x) + a_d G(\x)}
   \end{equation*}
   and 
   \begin{equation*}
       J(\bm{\gamma}) = J(\gamma_k, \gamma_d) := \int_{[-1, 1]^n} e \of{\gamma_k F(\bm{\xi}) + \gamma_d G(\bm{\xi})} \diff \bm{\xi}.
   \end{equation*}
\end{definition}
To make the notation simpler, set
\begin{equation}\label{eq:J_X}
    J_X(\bm{\gamma}) := X^n J(X^k \gamma_k, X^d \gamma_d) = \int_{[-X, X]^n} e \of{\gamma_k F(\bm{\xi}) + \gamma_d G(\bm{\xi})} \diff \bm{\xi}.
\end{equation}

\begin{lemma}\label{lemma:rewrite_major_int_1}
    The integral over the major arcs can be written as
    \begin{equation*}
        \int_{\Pa(\omega)} S(\bm{\alpha}) \diff \bm{\alpha} = \sum_{s \leq c' X^{\omega}} s^{-n} \sum_{\substack{1 \leq a_k, a_d \leq s \\ (s, a_k, a_d) = 1}} S(s; \a) \int_{\substack{\abs{\gamma_k} \leq c' X^{-k + \omega} \\ \abs{\gamma_d} \leq c' X^{-d + \omega}}} J_{X} (\bm{\gamma}) \diff \bm{\gamma} + O \of{X^{n-k-d-1 + 7 \omega}}.
    \end{equation*}
\end{lemma}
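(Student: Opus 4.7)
The plan is to perform the standard major-arc unpacking, following \cite[Lemma 4.1]{brandes_hasse_2021}. I would first verify that the triples $(s, a_k, a_d)$ with $\gcd(s, a_k, a_d) = 1$ and $1 \leq s \leq c' X^\omega$ parametrise disjoint subsets of $\Pa(\omega)$ when $c'$ is chosen small enough: two distinct rationals $a/s, a'/s'$ with $s, s' \leq c'X^\omega$ differ by at least $(c')^{-2} X^{-2\omega}$, which exceeds the admissible slack $2c' X^{-k+\omega}$ (and likewise $2c' X^{-d+\omega}$) once $\omega < k/3$ and $X$ is large, and the coprimality condition in turn forces the triple to be uniquely determined by the rational pair. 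This reduces the integral to a sum, over such triples, of an integral over $\abs{\gamma_k} \leq c' X^{-k + \omega}$, $\abs{\gamma_d} \leq c' X^{-d + \omega}$ via the substitution $\alpha_k = a_k/s + \gamma_k$, $\alpha_d = a_d/s + \gamma_d$.

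Next I would split the sum defining $S(\bm{\alpha})$ by the residue class of $\x$ modulo $s$, writing $\x = s\y + \r$ with $\r \in \set{0, \dots, s-1}^n$ and $\y \in \Z^n$ subject to $s\y + \r \in [-X, X]^n$. Integrality of $F$ and $G$ yields $F(s\y + \r) \equiv F(\r) \Mod{s}$ and $G(s\y + \r) \equiv G(\r) \Mod{s}$, so the factor $e_s\of{a_k F(\x) + a_d G(\x)} = e_s\of{a_k F(\r) + a_d G(\r)}$ is independent of $\y$. Summing over $\r$ produces the Gauss-type sum $S(s; \a)$, and the inner $\y$-sum
\begin{equation*}
    T(\bm{\gamma}; \r, s) := \sum_{\substack{\y \in \Z^n \\ s\y + \r \in [-X, X]^n}} e \of{\gamma_k F(s\y + \r) + \gamma_d G(s\y + \r)}
\end{equation*}
remains to be compared with $s^{-n} J_X(\bm{\gamma})$.

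The comparison rests on the gradient bound
\begin{equation*}
    \sup_{\bm{\xi} \in [-X, X]^n} \abs{\gamma_k \nabla F(\bm{\xi}) + \gamma_d \nabla G(\bm{\xi})} \ll \abs{\gamma_k} X^{k-1} + \abs{\gamma_d} X^{d-1} \ll X^{\omega - 1},
\end{equation*}
which follows from the homogeneity of $F$ and $G$ together with the defining constraints on $\bm{\gamma}$ inside $\Pa(\omega)$. Iterating a one-dimensional partial summation in each coordinate direction gives $\abs{s^n T(\bm{\gamma}; \r, s) - J_X(\bm{\gamma})} \ll s X^{n + \omega - 1}$, with the interior gradient term absorbing an $O(s X^{n-1})$ boundary contribution. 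Summing over the $s^n$ residues $\r$ yields $\abs{S(\bm{\alpha}) - s^{-n} S(s; \a) J_X(\bm{\gamma})} \ll s X^{n + \omega - 1}$. Integrating this pointwise error over the $\bm{\gamma}$-box (of volume $\ll X^{-k-d + 2\omega}$), summing over at most $s^2$ pairs $(a_k, a_d)$ and over $s \leq c' X^\omega$, the cumulative error is
\begin{equation*}
    \sum_{s \leq c' X^\omega} s^3 X^{n-k-d + 3\omega - 1} \ll X^{4\omega} \cdot X^{n-k-d + 3\omega - 1} = X^{n-k-d-1 + 7\omega},
\end{equation*}
matching the claimed bound. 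The main technical obstacle is organising the Riemann-sum approximation carefully enough that both the gradient and boundary contributions combine to yield precisely the exponent $7\omega$ in the final error.
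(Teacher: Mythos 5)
Your proposal is correct and follows essentially the same route as the paper: everything rests on the pointwise approximation $\abs{S(\bm{\alpha}) - s^{-n} S(s; \a) J_X(\bm{\gamma})} \ll s X^{n-1}\of{1 + X^k \abs{\gamma_k} + X^d \abs{\gamma_d}} \ll X^{n-1+2\omega}$, and your error accounting ($2\omega$ from this pointwise bound, $5\omega$ from the total measure of the arcs) reproduces the exponent $7\omega$ exactly as the paper does. One small nit: for disjointness of the arcs the binding constraint comes from the lower-degree coordinate, so you need $3\omega < d$ rather than $3\omega < k$; this is harmless since $\omega = (d-1)\eta + \theta$ is small for small $\eta$.
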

\begin{proof}
    For each $\bm{\alpha} \in \Pa(\omega)$, the separation in \cref{eq:alphas_major} yields the estimate
    \begin{equation}\label{eq:rewrite_S_q_J}
        \abs{S(\alpha_k, \alpha_d) - s^{-n} S(s; \a) J_{X}(\bm{\gamma})} \ll X^{n-1} s \of{1 + X^k \abs{\gamma_k} + X^d \abs{\gamma_d}},
    \end{equation}
    and the right hand side is $\ll X^{n-1+ 2 \omega}$ for $\bm{\alpha} \in \Pa(\omega)$ by construction.
\end{proof}

\begin{definition}
    With a large number $R$, define the truncated singular series
    \begin{equation*}
        \Series(R) = \sum_{1 \leq s \leq R} s^{-n} \sum_{\substack{a_k, a_d = 1 \\ (s, a_k, a_d) = 1}}^s S(s; \a),
    \end{equation*}
    and the truncated singular integral
    \begin{equation*}
        \Jintegral(R) = \int_{\substack{\abs{\gamma_k} \leq R \\ \abs{\gamma_d} \leq R}} J(\bm{\gamma}) \diff \bm{\gamma}.
    \end{equation*}
\end{definition}

A variable change $\gamma_k' = X^k \gamma_k$ and $\gamma_d' = X^d \gamma_d$ in \cref{lemma:rewrite_major_int_1}, together with \cref{eq:J_X}, shows that
\begin{equation}\label{eq:rewrite_major_int_2}
    \int_{\Pa(\omega)} S(\bm{\alpha}) \diff \bm{\alpha} = X^{n - k-d} \Series(c'X^\omega) \Jintegral(c'X^{\omega}) + O \of{X^{n - k-d-1 + 7 \omega}}.
\end{equation}

We assume now that \cref{eq:n20} holds, and consider each part separately.
\begin{lemma}\label{lemma:J}
    Let $J(\gamma)$ and $\Jintegral(R)$ be as above.
    \begin{enumerate}[label={(\roman*)}]
        \item For any $\bm{\gamma} =(\gamma_k, \gamma_d) \in \R^2$, we have \label{item:J_bound}
        \begin{equation*}
            \abs{J(\bm{\gamma})} \ll \min \of{1, \abs{\gamma_k}^{-\frac{1}{2^d}n + \varepsilon}, \abs{\gamma_d}^{- \of{\frac{d-1}{\kappa} + \frac{2^d}{n}}^{-1} + \varepsilon}}.
        \end{equation*}
        \item Suppose that \cref{eq:kappa_n_2} holds. Then the limit $\lim_{R \to \infty} \Jintegral(R)$ exists, and there is a $\nu > 0$ such that \label{item:J_conv}
        \begin{equation*}
            \abs{\Jintegral(2R) - \Jintegral(R)} \ll R^{- \nu}. 
        \end{equation*}
    \end{enumerate}
\end{lemma}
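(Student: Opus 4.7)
The plan is to prove Part~(i) via three separate bounds and then derive Part~(ii) by splitting the region of integration.

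For Part~(i), the trivial bound $\abs{J(\bm{\gamma})} \leq 2^n$ is immediate from the triangle inequality. For the decay in $\abs{\gamma_k}$, I would run the continuous analogue of the argument of \cref{lemma:ch}: apply $d$ Cauchy--Schwarz steps to $J$, each reducing the degree of the phase in every variable $\xi_i$ by one. After $d$ differencings, the $\gamma_d G$-contribution vanishes, and, thanks to the diagonality of $F$, the $\gamma_k F$-contribution becomes, as a function of each $\xi_i$, a linear polynomial with leading coefficient proportional to $\gamma_k c_i h_{1,i}\cdots h_{d,i}$. Integrating $\xi_i$ over $[-1,1]$ then produces the elementary bound $\min\of{1, \abs{\gamma_k h_{1,i}\cdots h_{d,i}}^{-1}}$. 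Since the integrand separates over $i$, the iterated integral over the differencing variables $\h_1, \dots, \h_d \in [-1, 1]^n$ factors as a product of $n$ identical $d$-dimensional integrals, each of size $O(\abs{\gamma_k}^{-1}(\log \abs{\gamma_k})^d)$, yielding $\abs{J(\bm{\gamma})}^{2^d} \ll \abs{\gamma_k}^{-n + \varepsilon}$, whence the claim.

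For the decay in $\abs{\gamma_d}$, I would apply a continuous analogue of \cref{lemma:subdivision} to $J$: for any admissible $\eta$, either $\abs{J(\bm{\gamma})} \ll \abs{\gamma_k}^{-\kappa \eta + \varepsilon}$, or $\gamma_d$ satisfies an integer-approximation condition that forces $\abs{\gamma_d}$ to be small relative to $\abs{\gamma_k}$. For $\abs{\gamma_d}$ sufficiently large, the latter condition cannot hold (it would require a non-existent positive integer denominator), so the former applies. Balancing $\eta$ against the constraints then yields the exponent $\of{(d-1)/\kappa + 2^d/n}^{-1}$.

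For Part~(ii), the convergence follows from Cauchy's criterion once the quantitative bound $\abs{\Jintegral(2R) - \Jintegral(R)} \ll R^{-\nu}$ is established. I would split the symmetric-difference region $\set{\abs{\bm{\gamma}} \leq 2R} \setminus \set{\abs{\bm{\gamma}} \leq R}$ into the part where $\abs{\gamma_k} \geq R$ and the part where $\abs{\gamma_d} \geq R$. The two decay bounds from Part~(i) give contributions of order $R^{2 - n/2^d + \varepsilon}$ and $R^{2 - c + \varepsilon}$, respectively, with $c = \of{(d-1)/\kappa + 2^d/n}^{-1}$. By \cref{eq:n20}, $n/2^d > 2k - 1 > 2$, and by \cref{eq:kappa_n_2} one has the weighted-average estimate
\begin{equation*}
\frac{d-1}{\kappa} + \frac{2^d}{n} = \frac{1}{2} \cdot \frac{2(d-1)}{\kappa} + \frac{1}{3} \cdot \frac{3 \cdot 2^d}{n} \leq \frac{1}{2}\of{\frac{2(d-1)}{\kappa} + \frac{3 \cdot 2^d}{n}} < \frac{1}{2},
\end{equation*}
so $c > 2$. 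Both exponents are strictly negative, yielding the claimed power saving, and the existence of the limit $\lim_{R \to \infty} \Jintegral(R)$ follows by telescoping.

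The main obstacle is the $\abs{\gamma_d}$-decay in Part~(i): transferring the integer-denominator subdivision argument of \cref{lemma:subdivision} to the continuous integral $J$ is delicate, since there is no direct analogue of a rational approximation $a/q$ in the real-variable setting, and one must verify carefully that the integer constraint is vacuous in the large-$\abs{\gamma_d}$ regime. The remaining steps follow by largely standard manipulations.
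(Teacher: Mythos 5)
Your Part~(ii) is correct and is essentially the standard argument the paper invokes (the analogue of \cite[Lemma 4.2]{brandes_hasse_2021}): the splitting of the difference region, the verification $n/2^d > 2$ from \cref{eq:n20}, and the weighted-average deduction of $\tfrac{d-1}{\kappa} + \tfrac{2^d}{n} < \tfrac12$ from \cref{eq:kappa_n_2} are all sound. For Part~(i), your route differs from the paper's: the paper cites \cite[Lemma 4.1]{brandes_hasse_2021}, where the bounds on $J$ are obtained by \emph{transference} from the exponential-sum estimates, i.e.\ from \cref{eq:rewrite_S_q_J} with $s=1$, $\a = \zero$, which gives $J(\bm{\gamma}) = X^{-n} S\of{X^{-k}\gamma_k, X^{-d}\gamma_d} + O\of{X^{-1}\of{1 + \abs{\gamma_k} + \abs{\gamma_d}}}$ for a free parameter $X$ to be optimised. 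Your continuous Weyl differencing for the $\abs{\gamma_k}$-decay is a legitimate alternative (the phase does separate over $i$ by diagonality, and $\int_{[-2,2]^d}\min\of{1,\abs{\gamma_k c_i h_1\cdots h_d}^{-1}}\diff\h \ll \abs{\gamma_k}^{-1+\varepsilon}$ does give $\abs{J}^{2^d}\ll\abs{\gamma_k}^{-n+\varepsilon}$), though the $n$ factors are not literally identical, only identically bounded.

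The genuine gap is the $\abs{\gamma_d}$-decay, which you flag as the main obstacle but do not resolve. There is no need to formulate a ``continuous analogue of \cref{lemma:subdivision}'': the transference identity above does the work. Given $\bm{\gamma}$ with $\abs{\gamma_d}$ large (and, without loss of generality, $\abs{\gamma_k}$ bounded by a fixed power of $\abs{\gamma_d}$, since otherwise the $\abs{\gamma_k}$-bound already implies the claim), choose $X \asymp \abs{\gamma_d}^{1/\omega}$ with $\omega = (d-1)\eta + \theta$ as in \cref{eq:omega}. Then $\alpha_d = X^{-d}\gamma_d$ satisfies $\norm{qr\alpha_d} = qr X^{-d}\abs{\gamma_d} \gg X^{-d+\omega}$ for every $q \leq X^{\theta}$, $r \leq X^{(d-1)\eta}$, so $\of{X^{-k}\gamma_k, X^{-d}\gamma_d}$ lies in $\mi(\theta)\times\T$ or in $\nn(\eta)$, and \cref{lemma:ch} or \cref{eq:S_bound_nn} gives $X^{-n}\abs{S} \ll X^{-\kappa\eta+\varepsilon}$. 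Since $\kappa\eta/\omega = \of{\tfrac{d-1}{\kappa} + \tfrac{\theta}{\kappa\eta}}^{-1} = \of{\tfrac{d-1}{\kappa} + \tfrac{2^d}{n}}^{-1}$ by \cref{eq:kappa_eta_theta}, this is exactly the claimed exponent, and the error term $X^{-1}\of{1+\abs{\gamma_k}+\abs{\gamma_d}}$ is negligible because $1/\omega$ is large. Without this (or an equivalent) argument, the third bound in Part~(i) --- and hence the second half of your Part~(ii) --- is unproved.
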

\begin{proof}
    This is:
    \begin{enumerate}[label={(\roman*)}]
        \item \cite[Lemma 4.1]{brandes_hasse_2021} with $\rho = 1$, $(\gamma, \bm{\delta}) = (\gamma_k, \gamma_d)$, $t = 2^{-d} n$ and $\sigma = 1$, and 
        \item the analogue of \cite[Lemma 4.2]{brandes_hasse_2021} using \labelcref{item:J_bound}.
    \end{enumerate}
\end{proof}

\begin{lemma}\label{lemma:S}
    Let $S(s; \a)$ and $\Series(R)$ be as above.
    \begin{enumerate}[label={(\roman*)}]
        \item For any $s \in \N$ and any $\a \in (\Z/s\Z)^2$ with $(s, a_k, a_d) = 1$, we have \label{item:S_bound}
        \begin{equation*}
            s^{-n} \abs{S(s; \a)} \ll s^{\varepsilon} \min \set{\of{\frac{s}{(s, a_k)}}^{-\frac{n}{2^d}}, s^{- \of{\frac{d-1}{\kappa} + \frac{2^d}{n}}^{-1}}}.
        \end{equation*}
        \item Suppose that \cref{eq:kappa_n_2} holds. Then the limit $\lim_{R \to \infty} \Series(R)$ exists, and there is a $\nu > 0$ such that \label{item:S_conv}
        \begin{equation*}
            \abs{\Series(2R) - \Series(R)} \ll R^{- \nu}. 
        \end{equation*}
    \end{enumerate}
\end{lemma}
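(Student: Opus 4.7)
The proof is the arithmetic analogue of \cref{lemma:J}, and closely mirrors \cite[Lemmas 4.3 and 4.4]{brandes_hasse_2021}. The plan is to first establish the min bound in \labelcref{item:S_bound}, then deduce the convergence in \labelcref{item:S_conv} by dyadic summation over $s$.

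For the first half of the min in \labelcref{item:S_bound} I would apply $d$ successive Weyl differencing steps to $S(s;\a)$ in complete-sum form, paralleling the proof of \cref{lemma:ch}. After $d$ steps the phase becomes linear in each $x_i$, so the diagonal structure of $F$ lets the inner sum factor as a product of complete one-variable linear exponential sums over $\Z/s\Z$, each equal to $s$ when $q := s/(s, a_k)$ divides $h_{1,i}\cdots h_{d,i}$ (up to bounded coefficient factors from $k!$ and the $c_i$) and $0$ otherwise. A standard divisor estimate bounds the number of admissible differencing tuples by $\ll (s^d/q)^n s^\varepsilon$, and taking $2^d$-th roots yields $s^{-n}|S(s;\a)| \ll q^{-n/2^d}\, s^\varepsilon$. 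The second half of the min, which is independent of $\a$, I would derive as in \cite[Lemma 4.3]{brandes_hasse_2021} via the complete-sum analogue of the dichotomy from \cref{lemma:subdivision}: either the diagonal Weyl bound already delivers size $s^{-1/A}$ with $A := (d-1)/\kappa + 2^d/n$, or the coprimality condition $(s, a_k, a_d) = 1$ forces an approximation relation between $s$ and $a_d$ that combines with a general Weyl estimate for $G$ to yield the same bound. This last step is the main technical obstacle, but should transpose essentially verbatim from the Archimedean argument.

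For \labelcref{item:S_conv}, I would estimate $|\Series(2R) - \Series(R)|$ directly from the min bound above. Summing trivially over $a_d$ contributes a factor of $s$, and summing over $a_k$ by grouping according to $q = s/(s, a_k)$ and using $\phi(q) \leq q$ reduces the inner sum to $\sum_{q \mid s} q \min(q^{-n/2^d}, s^{-1/A})$. Balancing the two bounds at the crossover $q_0 = s^{2^d/(nA)}$ gives $\sum_{\a} s^{-n}|S(s;\a)| \ll s^{1 - (1 - 2^d/n)/A + \varepsilon}$, so that $|\Series(2R) - \Series(R)| \ll R^{2 - (1 - 2^d/n)/A + \varepsilon}$. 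Since \cref{eq:kappa_n_2} is equivalent to $(1 - 2^d/n)/A > 2$, this exponent is strictly negative, yielding a decay of $R^{-\nu}$ for some $\nu > 0$; convergence of $\Series(R)$ then follows by telescoping over dyadic scales.
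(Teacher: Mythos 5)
Your proposal is correct and follows essentially the same route as the paper, which simply invokes \cite[Lemmas 4.3 and 4.4]{brandes_hasse_2021} with the parameters $\rho=\sigma=1$, $t=2^{-d}n$; your reconstruction of that argument (complete-sum Weyl differencing for the first bound, the arithmetic analogue of the \cref{lemma:subdivision} dichotomy for the second, and the dyadic summation with crossover at $q_0=s^{2^d/(nA)}$) is sound, and your final exponent $2-(1-2^d/n)/A$ being negative is indeed exactly equivalent to \cref{eq:kappa_n_2}.
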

\begin{proof}
    This is:
    \begin{enumerate}[label={(\roman*)}]
        \item \cite[Lemma 4.3]{brandes_hasse_2021} with $\rho = 1$, $(a, \mathbf{b}) = (a_k, a_d)$, $t = 2^{-d} n$ and $\sigma = 1$, and
        \item the analogue of \cite[Lemma 4.4]{brandes_hasse_2021} using \labelcref{item:S_bound}.
    \end{enumerate}
\end{proof}

\begin{definition}
    Define
    \begin{equation*}
        \Jintegral:= \lim_{R \to \infty} \Jintegral(R) \quad \text{and} \quad \Series := \lim_{R \to \infty} \Series(R).
    \end{equation*}
\end{definition}

\begin{proof}[Proof of \cref{thm:main}]
    Assume that \cref{eq:n20} holds. First, the treatment of the minor terms culminating in \cref{eq:only_major_left} boils the problem down to estimating the integral over $\Na(\eta)$. Next, \cref{eq:rewrite_major_int_2} formulates the integral over the enlarged major arcs in terms of $\Series(c'X^\omega)$ and $\Jintegral(c'X^{\omega})$. Using the convergence guaranteed in \cref{lemma:J,lemma:S}, and the fact that the hypothesis \cref{eq:n20} implies that the variables can be chosen to satisfy \cref{eq:kappa_n_2}, one gets
    \begin{equation*}
        N_{F, G}(X) = X^{n - k-d } \Jintegral \Series + O \of{X^{n - k-d - \nu}}
    \end{equation*}
    for some $\nu > 0$. 

    Recall the notation from \cref{sec:overview_major}. Now the arguments of \cite[Theorem 2.4]{vaughan_hardy-littlewood_2003} show that $\Series = \prod_p \chi_p$, and that there is a positive number $p_0$ such that
    \begin{equation*}
        \frac{1}{2} < \prod_{p > p_0} \chi_p < \frac{3}{2}.
    \end{equation*}
    By Hensel's lemma, it follows that $\Series > 0$ if and only if the system $F(\x) = G(\x) = 0$ has a non-singular solution in all $p$-adic fields. 
    
    Likewise, Schmidt shows in \cite[Lemma 2 and \S 11]{schmidt_simultaneous_1982} that $\Jintegral> 0$ if the system $F(\x) = G(\x) = 0$ has a non-singular solution in the $n$-dimensional unit cube $[-1, 1]^n$. After defining $\chi_{\infty} = \Jintegral$, this gives \cref{thm:main}.
\end{proof}

\printbibliography

@article{wooley_nested_2019,
	title = {Nested efficient congruencing and relatives of {Vinogradov}'s mean value theorem},
	volume = {118},
	issn = {0024-6115,1460-244X},
	doi = {10.1112/plms.12204},
	number = {4},
	journal = {Proceedings of the London Mathematical Society. Third Series},
	author = {Wooley, T. D.},
	year = {2019},
	mrnumber = {3938716},
	pages = {942--1016},
}

@article{wooley_rational_2015,
	title = {Rational solutions of pairs of diagonal equations, one cubic and one quadratic},
	volume = {110},
	copyright = {http://doi.wiley.com/10.1002/tdm\_license\_1.1},
	issn = {00246115},
	doi = {10.1112/plms/pdu054},
	number = {2},
	journal = {Proceedings of the London Mathematical Society},
	author = {Wooley, T. D.},
	year = {2015},
	pages = {325--356},
}

@book{vaughan_hardy-littlewood_2003,
	address = {Cambridge},
	edition = {2},
	series = {Cambridge {Tracts} {In} {Mathematics}},
	title = {The {Hardy}-{Littlewood} method},
	isbn = {978-0-521-57347-4},
	number = {125},
	publisher = {Cambridge Univ. Press},
	author = {Vaughan, R. C.},
	year = {2003},
}

@incollection{schmidt_simultaneous_1982,
	series = {Progress in {Mathematics}},
	title = {Simultaneous rational zeros of quadratic forms},
	volume = {22},
	booktitle = {Séminaire {Delange}-{Pisot}-{Poitou} ({Théorie} des {Nombres}), {Paris} 1980--1981},
	publisher = {Birkhäuser},
	author = {Schmidt, W. M.},
	year = {1982},
	pages = {281--307},
}

@article{browning_forms_2017,
	title = {Forms in many variables and differing degrees},
	volume = {19},
	issn = {1435-9855, 1435-9863},
	doi = {10.4171/jems/668},
	number = {2},
	journal = {Journal of the European Mathematical Society},
	author = {Browning, T. D. and Heath-Brown, D. R.},
	year = {2017},
	pages = {357--394},
}

@article{browning_rational_2015,
	title = {Rational points on intersections of cubic and quadric hypersurfaces},
	volume = {14},
	issn = {1474-7480, 1475-3030},
	doi = {10.1017/S1474748014000127},
	abstract = {Weinvestigate the Hasse principle for complete intersections cut out by a quadric hypersurface and a cubic hypersurface defined over the rational numbers.},
	number = {4},
	journal = {Journal of the Institute of Mathematics of Jussieu},
	author = {Browning, T. D. and Dietmann, R. and Heath-Brown, D. R.},
	year = {2015},
	keywords = {11G35, 11P55, 14G05, Diophantine equations, Hasse principle, Weyl sum, circle method, complete intersection, cubic, quadratic, rational point, system, van der Corput method},
	pages = {703--749},
}

@article{brandes_hasse_2021,
	title = {The {Hasse} principle for diagonal forms restricted to lower-degree hypersurfaces},
	volume = {15},
	issn = {1937-0652, 1944-7833},
	doi = {10.2140/ant.2021.15.2289},
	abstract = {We establish the analytic Hasse principle for Diophantine systems consisting of one diagonal form of degree k and one general form of degree d, where d is smaller than k. By employing a hybrid method that combines ideas from the study of general forms with techniques adapted to the diagonal case, we are able to obtain bounds that grow exponentially in d but only quadratically in k, reflecting the growth rates typically obtained for both problems separately. We also discuss some of the most interesting generalisations of our approach.},
	number = {9},
	journal = {Algebra \& Number Theory},
	author = {Brandes, J. and Parsell, S. T.},
	year = {2021},
	note = {Publisher: MSP},
	keywords = {11D45, 11D72, 11P55, 14G05, Forms in many variables, Hardy–Littlewood method, diagonal forms},
	pages = {2289--2314},
}

\end{document}